





\documentclass[11pt]{amsart}
\usepackage{amsmath}
\usepackage{enumerate}
\usepackage{amssymb}
\usepackage{pb-diagram}
\usepackage{enumerate}
\usepackage{graphicx}
\usepackage[active]{srcltx}
\usepackage[lite]{amsrefs}

\newtheorem{theorem}{Theorem}[section]
\newtheorem{thm}[theorem]{Theorem}
\newtheorem{prop}[theorem]{Proposition}

\newtheorem{claim}[theorem]{Claim}
\newtheorem{fact}[theorem]{Fact}
\newtheorem{cor}[theorem]{Corollary}
\newtheorem{proposition}[theorem]{Proposition}
\newtheorem{lemma}[theorem]{Lemma}

\theoremstyle{definition}
\newtheorem{defn}[theorem]{Definition}

\theoremstyle{remark}
\newtheorem{rmk}[theorem]{Remark}
\newtheorem{remark}[theorem]{Remark}

\newcommand{\Z}{\mathbb{Z}}
\newcommand{\N}{\mathbb{N}}

\newcommand{\R}{\mathbb{R}}

\newcommand{\CM}{\mathcal M}
\newcommand{\sub}{\subseteq}

\newcommand{\ra}{\rangle}
\newcommand{\la}{\langle}

\newcommand{\noi}{\noindent}
\newcommand{\CH}{\mathcal H}

\newcommand{\Vdef}{$\bigvee$-definable }




\newcommand{\CV}{\mathcal V}

\newcommand{\CU}{\mathcal U}
\newcommand{\res}{\ensuremath{\upharpoonright}}

\newcommand{\Rarr}{\ensuremath{\Rightarrow}}

\newcommand{\bb}[1]{\ensuremath{\mathbb{#1}}}

\newcommand{\Lam}{\ensuremath{\Lambda}}

\newcommand{\cal}[1]{\ensuremath{\mathcal{#1}}}

\title[Definable quotients]{Definable quotients of locally definable groups}
\author{Pantelis~E.~Eleftheriou}
\address{CMAF, Universidade de Lisboa,
Av. Prof. Gama Pinto 2, 1649-003 Lisboa, Portugal} \email{pelefthe@uwaterloo.ca}
\thanks{The first author was supported by the Funda\c{c}\~ao para a Ci\^encia e a Tecnologia
grants SFRH/BPD/35000/2007 and PTDC/MAT/101740/2008}

\author{Ya'acov Peterzil}
\address{Department of Mathematics, University of Haifa, Haifa, Israel}
\email{kobi@math.haifa.ac.il}

\begin{document}

\subjclass[2010]{03C64, 03C68, 22B99}
\keywords{O-minimality, locally definable groups, definable
quotients, type-definable groups}
\date{\today}

\begin{abstract} We study locally definable abelian groups $\CU$ in various settings and examine conditions under
which the quotient of $\CU$ by a discrete subgroup might be definable. This turns
out to be related to the existence of the type-definable subgroup $\CU^{00}$ and to
the divisibility of $\CU$.
\end{abstract}
 \maketitle

\section{Introduction}

This is the first of two papers (originally written as one) around
groups definable in o-minimal expansions of ordered groups. The
ultimate goal of this project is to reduce the analysis of such
groups  to semi-linear groups and to groups definable in o-minimal
expansions of real closed fields. This reduction is carried out in
the second paper (\cite{ep-sbd}). In the current paper, we prove a
crucial lemma in that perspective, Theorem \ref{excursion} below.
This theorem is proved by analyzing \Vdef abelian groups in
various settings and investigating when such groups have definable
quotients of the same dimension. The analysis follows closely
known work on definably compact groups. We make strong use of
their minimal type-definable subgroups of bounded index, and of
the solution to so-called Pillay's conjecture in various settings.
\\

In the rest of this introduction we recall the main definitions and state the
results of this paper.\medskip

\emph{Until Section \ref{sec-div}, and unless stated otherwise, $\cal M$ denotes a sufficiently
saturated, not necessarily o-minimal, structure.}\medskip

If $\CM$ is $\kappa$-saturated, by \emph{bounded} cardinality we mean cardinality
smaller than $\kappa$. Since ``bounded'' has a different meaning in the context of
an ordered structure we use ``small'' to refer to subsets of $M^n$ of bounded
cardinality. Every small definable set is therefore finite.

\subsection{\Vdef and locally definable sets}

A \emph{\Vdef group}  is a group $\la \CU,\cdot \ra$ whose universe is a directed
union
  $\CU=\bigcup_{i\in I} X_i$ of
definable subsets of $M^n$ for some fixed $n$  (where $|I|$ is
bounded) and for every $i,j\in I$, the restriction of group
multiplication to $X_i\times X_j$ is a definable function (by
saturation, its image is contained in some $X_k$). Following
\cite{ed1}, we say that $\la \CU,\cdot \ra$ is \emph{locally
definable} if $|I|$ is countable. We are mostly interested here in
{\em definably generated} groups, namely \Vdef groups which are
generated as a group by a definable subset. These groups are
locally definable. An important example of such groups is the
universal cover of a definable group (see \cite{edel2}).  In
\cite[Section 7]{HPP} a more general notion is introduced, of an
Ind-definable group, where the $X_i$'s are not assumed to be
subsets of the same sort and there are definable maps which
connect them to each other.

A map $\phi:\CU\to \CH$ between \Vdef (locally definable) groups is called
{\em \Vdef (locally definable)}  if for every definable $X\sub \CU$ and $Y\sub \CH$,
$graph(\phi)\cap (X\times Y)$ is a definable set. Equivalently, the restriction of $\phi$  to any definable set is definable.

\begin{remark}\label{Meq} If in the above definition,
instead of $M^n$ we allow all $X_i$'s to be subsets of a fixed
sort $S$ then the analogous definition of groups and maps works in
$\CM^{eq}$. This will allow us to discuss locally definable maps
from a locally definable group $\CU$ onto an interpretable group
$\CV$.
\end{remark}
\subsection{Compatible subgroups}

\begin{defn}
(See \cite{ed1}) For a \Vdef group $\CU$, we say that $\CV\sub \CU$ is {\em a
compatible subset of $\CU$} if for every definable $X\sub \CU$, the intersection
$X\cap \CV$ is a definable set (note that in this case $\CV$ itself is a bounded
union of definable sets).
\end{defn}

Clearly, the only compatible \Vdef subsets of
a definable group are the definable ones. Note that if $\phi:\CU\to \CV$ is a \Vdef
homomorphism between \Vdef groups then $\ker(\phi)$ is a compatible \Vdef normal
subgroup of $\CU$. Compatible subgroups are used in order to obtain \Vdef quotients,
but for that we need to restrict ourselves to locally definable groups. Together with \cite[Theorem 4.2]{ed1}, we have:

\begin{fact}\label{edmundo} If $\CU$ is a locally definable group and $\CH\sub \CU$ a locally definable normal
subgroup then $\CH$ is a compatible subgroup of $\CU$ if and only if there exists a
locally definable surjective homomorphism of locally definable  groups $\phi:\CU\to
\CV$ whose kernel is $\CH$.
\end{fact}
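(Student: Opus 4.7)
The plan is to treat the two implications separately, as they have very different characters.

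The converse direction (existence of $\phi$ implies compatibility of $\CH$) is soft and follows from the definitions. Suppose $\phi:\CU\to\CV$ is a locally definable surjective homomorphism with $\ker\phi = \CH$. Let $X\sub\CU$ be an arbitrary definable subset. Since $\phi$ is locally definable, its restriction $\phi\res X$ is a definable function, and hence
\[
X\cap\CH \;=\; (\phi\res X)^{-1}(e_\CV)
\]
is definable. As this holds for every definable $X$, the subgroup $\CH$ is compatible.

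The forward direction is the substantive content, and is obtained by invoking \cite[Theorem 4.2]{ed1}. The idea one expects that theorem to implement is as follows. Write $\CU=\bigcup_{n\in\N} X_n$ as a countable increasing union of definable sets, which we may assume are symmetric and arranged so that for each $n$ there is $k(n)$ with $X_n\cdot X_n^{-1}\sub X_{k(n)}$. Because $\CH$ is compatible, each $X_{k(n)}\cap\CH$ is definable, so the relation
\[
E_n(x,y):\; xy^{-1}\in\CH
\]
is a definable equivalence relation on $X_n$. Form the quotient $Y_n = X_n/E_n$ in $\CM^{eq}$; the inclusions $X_n\hookrightarrow X_m$ induce injections $Y_n\hookrightarrow Y_m$, and setting $\CV = \bigcup_n Y_n$ gives a locally definable set in $\CM^{eq}$ (cf.\ Remark \ref{Meq}). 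The normality of $\CH$ ensures that group multiplication on $\CU$ descends to a well-defined operation on $\CV$, whose restriction to any $Y_n\times Y_n$ lands in some $Y_{k'}$ and is definable because it is the composition of the definable multiplication $X_n\times X_n\to X_{k'}$ with definable quotient maps. The assignment $\phi(x)=[x]\in Y_n$ for $x\in X_n$ is then a locally definable surjective homomorphism with kernel exactly $\CH$.

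The main obstacle lies entirely in the forward direction, namely the bookkeeping that produces a coherent group object in $\CM^{eq}$ out of the family $\{Y_n\}$: one must check that the multiplication maps defined piece by piece are compatible across indices, and that the resulting $\CV$ is genuinely locally definable rather than merely \Vdef. This is precisely the gluing argument carried out in \cite[Theorem 4.2]{ed1}, which is why the countability of the index set (i.e., the local definability rather than just \Vdef-ness of $\CU$) is essential.
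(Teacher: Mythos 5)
Your proposal matches the paper's treatment: the paper likewise disposes of the ``$\Leftarrow$'' direction by the observation that the kernel of a \Vdef homomorphism is automatically compatible (restriction of $\phi$ to any definable set is definable, so its kernel there is definable), and delegates the substantive ``$\Rightarrow$'' direction to \cite[Theorem 4.2]{ed1}, exactly as you do. Your sketch of the quotient construction in $\CM^{eq}$ is a reasonable outline of what that cited theorem provides, so there is nothing to correct.
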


\subsection{Connectedness}

If $\cal M$ is an o-minimal structure and $\CU\sub M^n$ is a \Vdef group then, by \cite[Theorem 4.8]{BaOt},
 it can be endowed with a manifold-like topology $\tau$, making it into a topological group. Namely, there exists a
bounded collection $ \{U_i:i\in I\}$ of definable subsets of $\CU$, whose union
equals $\CU$, such that each $U_i$ is in definable bijection with an open subset of
$M^k$ ($k=\dim\CU$), and the transition maps are continuous. The group operation and
group inverse are continuous with respect to this induced topology. Moreover, the
$U_i$'s are definable over the same parameters which define $\CU$.
 The topology $\tau$ is determined by
the ambient topology of $M^n$ in the sense that at every generic
point of $\CU$ the two topologies coincide. From
now on, whenever we refer to a topology on $G$, it is $\tau$ we are considering.

\begin{defn} (See \cite{BE}) In an o-minimal structure, a \Vdef group $\CU$ is called {\em
connected} if there exists no \Vdef compatible subset $\emptyset\subsetneqq
\CV\subsetneqq \CU$ which is both closed and open with respect to the group
topology.
\end{defn}

\subsection{Definable quotients}

\begin{defn} Given a \Vdef group $\CU$ and $\Lambda_0\sub \CU$ a
normal subgroup, we say that $\CU/\Lambda_0$ is \emph{definable}  if there exists a
definable  group $\overline{K}$ and a surjective \Vdef homomorphism $\mu:\CU\to
\overline{K}$ whose kernel is $\Lambda_0$.
\end{defn}


One can define the notion of an \emph{interpretable quotient} by
replacing `` $\overline{K}$ definable" by ``$\overline{K}$
interpretable" in the above definition. Note, however, that in
case $\CM$ is an o-minimal
 structure and $\CU$ is locally definable,  such as in Section \ref{sec-div} below,
by \cite[Corollary 8.1]{ed1}, the group $\CU$ has strong definable choice for
definable families of subsets of $\CU$. Namely,  for every definable family of
subsets of $\CU$, $\{X_t:t\in T\}$, there is a definable function $f:T\to \bigcup
X_t$ such that for every $t\in T$, $f(t)\in X_t$ and if $X_{t_1}=X_{t_2}$ then
$f(t_1)=f(t_2)$. In particular, every interpretable quotient of $\CU$ would be
definably isomorphic to a definable group.

\subsection{Results}

Our results concern the existence of the type-definable group
$\CU^{00}$, for a \Vdef abelian group $\CU$. Recall (\cite[Section
7]{HPP}) that for a definable, or \Vdef group $\CU$, we write
$\CU^{00}$ for the smallest, if such exists, type-definable
subgroup of $\CU$ of bounded index. In particular we require that
$\CU^{00}$ is contained in a definable subset of $\CU$. From now
on we use the expression ``{\em $\CU^{00}$ exists}'' to mean that
``there exists a smallest type-definable subgroup of $\CU$ of
bounded index, which we denote by $\CU^{00}$''. Note that a type
definable subgroup $\CH$ of $\CU$ has bounded index if and only if
there are no new cosets of $\CH$ in $\CU$ in elementary extensions
of $\CM$.

When $\CU$ is a definable group in a NIP structure, then $\CU^{00}$ exists (see
Shelah's theorem in \cite{Shelah}). When $\CU$ is a \Vdef group in a NIP structure
or even in an o-minimal one, then $\CU^{00}$ may not always exist. However, if we
assume that {\em some}  type-definable subgroup of bounded index exists, then there
is a smallest one (see \cite[Proposition 7.4]{HPP}). Recall that a definable $X\sub
\CU$ is called \emph{left generic} if boundedly many translates of $X$ cover $\CU$.
In Section \ref{sec-bdd}, we prove the following theorem for \Vdef groups:\\

\noindent\textbf{Theorem \ref{NIP-vdefinable}.} \emph{Let $\CU$ be an abelian \Vdef group in a NIP structure.
 If the definable non-generic sets in $\CU$ form an
ideal and $\CU$ contains at least one definable generic set, then $\CU^{00}$ exists.}\\

We also prove (Corollary \ref{defcomp}) that when we work in o-minimal expansions of
ordered groups,
 for a \Vdef abelian group which contains a definable generic set and is generated by a definably compact
 set, the non-generic definable subsets do form an ideal (this is a generalization of the  same result
  from \cite{Pet-Pi} for definably compact group, which itself relies heavily on work
 in \cite{Dolich}).

In Section \ref{sec-div}, we use these results to establish the equivalence of the following conditions.\\

\noindent\textbf{Theorem \ref{finalcor}.} \emph{Let $\CU$ be a connected
abelian \Vdef group in an o-minimal expansion of an ordered group, with $\CU$
 definably generated.  Then there is $k\in \bb N$ such that the following are equivalent:\\
\noi (i)  $\CU$ contains a definable generic set.\\
\noi (ii) $\CU^{00}$ exists.\\
\noi (iii) $\CU^{00}$ exists and $\CU/\CU^{00}\simeq \R^k\times {\mathbb T}^r$, where $\bb T$ is the circle group and $r\in \bb N$.\\
\noi (iv) There is a definable group $G$, with $\dim G=\dim \CU$,  and a \Vdef
surjective homomorphism $\phi:\CU\to G$.}

\emph{If $\CU$ is generated by a definably compact set, then (ii) is strengthened by the condition that $k+r=\dim \CU$.}\\

We conjecture, in fact, that the conditions of Theorem \ref{finalcor} are always true.\\

\noindent \textbf{Conjecture A.} \emph{ Let $\CU$ be a connected abelian \Vdef group
in an
o-minimal structure, which is definably generated. Then\\
\noindent (i) $\CU$ contains a definable generic set.\\
\noindent (ii) $\CU$ is divisible.}\\

We do not know if Conjecture A is true, even when $\CU$ is a
subgroup of a definable group. We do show that it is sufficient to
prove (i) under restricted conditions, in order to deduce the full
conjecture. In a recent paper (see \cite{ep-ND}) we prove that
Conjecture A holds for definably generated subgroups of $\la
R^n,+\ra$, in an o-minimal expansion of a real closed field $R$.

Finally, we derive the theorem that is used in \cite{ep-sbd}.\\

\noindent\textbf{Theorem \ref{excursion}.} \emph{Let $\CU$ be a connected abelian  \Vdef
group in an o-minimal expansion of an ordered group, with $\CU$ definably generated. Assume that $X\sub \CU$ is a definable set and
$\Lam\leqslant \CU$  is a finitely generated subgroup such that $X+\Lam=\CU$.}

\emph{Then there is a subgroup $\Lam'\sub \Lam$ such that $\CU/\Lam'$ is a definable group.}

\emph{If $\CU$ is generated by a definably compact set, then $\CU/\Lam'$ is moreover definably compact.}\\

\subsection{Notation}\label{notation}
Given a group $\la G, \cdot\ra$
 and a set $X\sub G$, we denote, for every $n\in \bb{N}$,
\begin{equation}
X(n)=\overbrace{XX^{-1} \cdots XX^{-1}}^{n-\text{times}}\notag
\end{equation}

We assume familiarity with the notion of definable compactness. Whenever we write that
a set is definably compact, or definably connected, we assume in particular that it is definable.

\subsection{Acknowledgements.} We wish to thank Elias Baro, Alessandro
Berarducci,  David Blanc, Mario Edmundo and Marcello Mamino for discussions which
were helpful during our work. We thank the anonymous referee for a careful reading
of the original manuscript.

\section{\Vdef groups and type-definable subgroups of bounded index}\label{sec-bdd}

 {\em In this section, unless stated otherwise,  $\CM$ denotes a sufficiently saturated, not necessarily
o-minimal, structure}.\smallskip

\subsection{Definable quotients of \Vdef groups}

We begin with a criterion for definability (and more generally
interpretability) of quotients.

\begin{lemma}\label{conditions}
Let $\la \CU,\cdot\ra$ be a \Vdef group and $\Lambda_0$ a small normal subgroup of
$\CU$. Then the following are equivalent:
\begin{enumerate}
\item The quotient $\CU/\Lam_0$ is interpretable in $\CM$.

\item  There is a definable $X\sub \CU$ such that (a) $X\cdot \Lambda_0 =\CU$ and
(b) for every definable $Y\sub \CU$, $Y \cap \Lambda_0$ is finite.

\item There is a definable $X\sub \CU$ such that (a) $X\cdot \Lambda_0 =\CU$ and (b)
$X\cap \Lambda_0$ is finite. \end{enumerate}
\end{lemma}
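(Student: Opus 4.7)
My strategy is to establish $(2)\Leftrightarrow(3)$ directly and then prove $(1)\Leftrightarrow(2)$. The implication $(2)\Rightarrow(3)$ is immediate by taking $Y=X$. For $(3)\Rightarrow(2)$, given a definable $Y\sub \CU$, I would exploit the cover $Y\sub \CU = \bigcup_{\lambda\in\Lambda_0} X\lambda$; since $\Lambda_0$ is small and $Y$ is definable, saturation yields a finite subcover $Y\sub X\lambda_1\cup\cdots\cup X\lambda_n$, and since $\Lambda_0$ is a subgroup we have $X\lambda_i\cap\Lambda_0=(X\cap\Lambda_0)\lambda_i$, so $Y\cap\Lambda_0$ is finite by (3)(b). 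For $(1)\Rightarrow(2)$, let $\mu\colon\CU\to\overline K$ be a surjective \Vdef homomorphism with kernel $\Lambda_0$ onto an interpretable group $\overline K$, and write $\CU=\bigcup_{i\in I}X_i$ as a bounded directed union of definable sets. The small family $\{\mu(X_i)\}_{i\in I}$ covers the single definable set $\overline K$, so saturation produces finitely many indices $i_1,\ldots,i_n$ with $\overline K=\mu(X_{i_1})\cup\cdots\cup\mu(X_{i_n})$; setting $X=\bigcup_j X_{i_j}$ yields $X\cdot\Lambda_0=\mu^{-1}(\overline K)=\CU$, which is (2)(a). For (2)(b), any definable $Y\sub\CU$ has $Y\cap\Lambda_0=(\mu\!\res\!Y)^{-1}(e_{\overline K})$, a definable subset of the small set $\Lambda_0$, and every small definable set is finite.

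For $(2)\Rightarrow(1)$, my plan is to interpret $\CU/\Lambda_0$ explicitly as $X/{\sim}$, where $x_1\sim x_2\Lrarr x_1 x_2^{-1}\in\Lambda_0$. Note that $XX^{-1}$ is a definable subset of $\CU$ (by saturation, the image of $X\times X^{-1}$ under the restricted definable multiplication lies in some single $X_k$), so (2)(b) yields $XX^{-1}\cap\Lambda_0=\{\lambda_1,\ldots,\lambda_m\}$ finite, and $\sim$ becomes the definable relation $\bigvee_{i=1}^m x_1 x_2^{-1}=\lambda_i$. The group operation on $X/{\sim}$ is defined by $\overline{x_1}\cdot\overline{x_2}=\overline z$ for the unique (modulo $\sim$) $z\in X$ with $z^{-1}x_1 x_2\in\Lambda_0$; existence of $z$ comes from (2)(a), and the graph of multiplication is definable by the same finite-disjunction trick applied to $X^{-1}XX\cap\Lambda_0$ (analogously for inversion using $XX\cap\Lambda_0$, and for the identity using $X\cap\Lambda_0$). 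Well-definedness and the group axioms are inherited from the group structure on $\CU/\Lambda_0$, and $u=z\lambda\mapsto\overline z$ is the required \Vdef surjection $\CU\to X/{\sim}$ with kernel $\Lambda_0$.

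No step appears genuinely hard. The only bookkeeping hurdle is to verify that the auxiliary subsets of $\CU$ to which one applies (2)(b) --- namely $XX^{-1}$, $X^{-1}XX$, and $XX$ --- are themselves definable; this follows from the \Vdef{}group axiom that multiplication restricted to any pair of definable sets is definable, together with saturation.
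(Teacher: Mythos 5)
Your proposal is correct and follows essentially the same route as the paper: saturation to extract a finite subcover of a definable set by translates, the observation that $Y\cap\Lambda_0$ is a finite union of translates of $X\cap\Lambda_0$, and the interpretation of the quotient as $X/{\sim}$ with the equivalence relation and group operation made definable by replacing membership in $\Lambda_0$ with membership in the finite sets $XX^{-1}\cap\Lambda_0$ and $X^{-1}XX\cap\Lambda_0$. The only difference is organizational --- you prove $(2)\Leftrightarrow(3)$ and $(1)\Leftrightarrow(2)$ where the paper runs the cycle $(1)\Rightarrow(2)\Rightarrow(3)\Rightarrow(1)$ --- but the content of each step is identical.
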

\begin{proof}
(1 \Rarr 2). We assume that there is a \Vdef surjective
$\mu:\CU\to \overline{K}$ with kernel $\Lam_0$, and $\overline{K}$
interpretable. By saturation, there is a definable subset $X\sub
\CU$ such that $\mu(X)=\overline{K}$ and hence $X\cdot \Lambda_0=
\CU$. Given any definable $Y\sub \CU$,  the restriction of $\mu$
to $Y$ is definable and thus
 the small set $\ker({\mu}_{\res Y})=Y \cap \Lambda_0$ is definable and, hence,
finite.

(2 \Rarr 3). This is obvious.

(3 \Rarr 1) We claim first that for every definable $Y\sub \CU$, the set  $Y\cap
\Lambda_0$ is finite. Indeed, since $Y\sub X\cdot\Lambda_0$ and $\Lambda_0$ is
small,  by saturation there exists a finite $F\sub \Lambda_0$ such that $Y\sub
X\cdot F$. We assume that $X\cap \Lambda_0$ is finite, and since $F$ is a finite
subset of $\Lam_0$ it follows that $(X\cdot F)\cap \Lam_0$ is finite which clearly
implies $Y\cap \Lam_0$ finite.

Fix a finite $F_1=XX^{-1}\cap \Lam_0$ and $F_2=XXX^{-1}\cap \Lam_0$.

 We now define on $X$ an equivalence relation $x\sim y$ if and only if
$xy^{-1}\in \Lam_0$ if and only if $xy^{-1}\in F_1$. This is a definable relation
since $F_1$ is finite. We can also define a group operation on the equivalence
classes: $[x]\cdot [y]=[z]$ if and only if $xyz^{-1}\in \Lambda_0$ if and only if
$xyz^{-1}\in F_2$. The interpretable group we get, call it $\overline{K}$,  is
clearly isomorphic to $\CU/\Lambda_0$, and we have a \Vdef homomorphism from $\CU$
onto $\overline{K}$, whose kernel is $\Lambda_0$.
\end{proof}

We will return to definable quotients of \Vdef groups in Section \ref{sec-div}.
We now focus on the existence of $\CU^{00}$ for a \Vdef group $\CU$.

\subsection{Subgroups of bounded index of \Vdef groups}\label{subgroups}

Let $\CU$ be a \Vdef group in an o-minimal structure. It is not
always true that $\CU$ has some type-definable subgroup of bounded
index. For example, consider a sufficiently saturated ordered
divisible abelian group $\la G,<, +\ra$ and in it take an infinite
increasing sequence of  elements $0<a_1<a_2<\cdots$ such that, for every
$n\in \N$, we have $na_i<a_{i+1}$. The subgroup $\bigcup_i (-a_i, a_i)$ of $G$ is a \Vdef group which does not have any
type-definable subgroup of bounded index. However, as is shown in
\cite{HPP} (see Proposition 6.1 and Proposition 7.4), if $\CU$
does have some type-definable subgroup of bounded index then it
has a smallest one; namely $\CU^{00}$ exists.

Our goal here is to show, under various assumptions on $\CU$, that
the ideal of non-generic definable sets gives rise to
type-definable subgroups of bounded index.

As is shown in \cite{Pet-Pi}, using Dolich's results in \cite{Dolich}, if $G$ is a
definably compact, abelian group in an o-minimal expansion of a real closed field
then the non-generic definable sets form an ideal. Later, it was pointed out in
\cite{ElSt} and \cite[Section 8]{pet-sbd} that the same proof works in expansions of
groups. We start by re-proving an analogue of the result for \Vdef groups (see Lemma
\ref{Dolich} below). We first define the corresponding notion of genericity and
prove some basic facts about it.

\begin{defn} Let $\CU$ be a \Vdef group. A definable $X\sub \CU$ is called {\em
left-generic} if there is a small subset $A\sub \CU$ such that $\CU=\bigcup_{g\in A}
gX$. We similarly define {\em right-generic}. The set $X$ is called {\em generic} if
it is both left-generic and right-generic.
\end{defn}

It is easy to see that a definable $X\sub \CU$ is generic if and only if for every
definable $Y\sub \CU$, there are finitely many translates of $X$ which cover $Y$.

 \begin{fact}\label{generic0}
 (1) If $\CU$ is a \Vdef group, then every \Vdef subgroup of
 bounded index is a compatible subgroup. In particular, if $X\sub \CU$ is a
 definable left-generic set, then the subgroup generated by $X$ is a compatible subgroup.

(2)  Assume that  $\CU$ is a \Vdef group in an o-minimal structure. If
$\CU$ is connected and $X\sub \CU$ is a left-generic set, then $X$ generates $\CU$.
 \end{fact}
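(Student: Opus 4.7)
The plan is to establish (1) by a saturation argument and then deduce (2) from connectedness using (1). For (1), I would prove the general statement that if $\CH=\bigcup_j H_j$ is a \Vdef subgroup of $\CU$ with $[\CU:\CH]$ bounded, then for every definable $Y\sub\CU$ the intersection $Y\cap\CH$ is definable. The heart is a two-step saturation argument. First, $Y$ meets only finitely many cosets of $\CH$: given a sequence $y_1,y_2,\ldots\in Y$ in pairwise distinct cosets, the partial type $\{x\in Y\}\cup\{x\notin y_iH_j:i\ge 1,\,j\in I\}$ is finitely satisfiable (any finite fragment is realized by $y_m$ for $m$ beyond the indices appearing), so by saturation it is realized, supplying an element of $Y$ in a brand-new coset; iterating transfinitely up to any cardinal below the saturation cardinal produces more cosets than $[\CU:\CH]$ allows, a contradiction. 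Second, with the cosets meeting $Y$ enumerated as $g_1\CH,\ldots,g_n\CH$, the inclusion
\[Y\sub\bigcup_{k=1}^n g_k\CH=\bigcup_{j\in I}\bigcup_{k=1}^n g_kH_j\]
presents $Y$ inside an increasing union of definable sets, so saturation of the definable $Y$ yields a single $j_0$ with $Y\sub\bigcup_k g_kH_{j_0}$; disjointness of the cosets then gives $Y\cap \CH=Y\cap H_{j_0}$, a definable set.

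For the ``in particular'' part of (1), the subgroup $\CH:=\la X\ra$ generated by a definable left-generic $X$ equals $\bigcup_n X(n)$, hence is \Vdef; the small set $A$ witnessing left-genericity also witnesses $\CU=A\cdot\CH$, so $[\CU:\CH]\le|A|$ is bounded, and the general statement above delivers compatibility. For (2), take the same $\CH=\la X\ra$, which is therefore a compatible \Vdef subgroup of $\CU$. A similar saturation argument shows that every definable subset of $\CU$ is covered by finitely many translates of $X$, forcing $\dim X=\dim\CU$; hence in the manifold topology on $\CU$ from \cite[Theorem 4.8]{BaOt}, $X$ has non-empty interior, so $\CH$ is open (cover it by $\CH$-translates of the interior) and, being an open subgroup of a topological group, also closed. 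Thus $\CH$ is a non-empty compatible clopen subset of $\CU$, and connectedness of $\CU$ forces $\CH=\CU$.

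The main obstacle is the finite-cosets step inside (1): one must arrange the transfinite iteration so that each new stage contributes a genuinely new coset meeting $Y$, and the iteration must outrun any \emph{a priori} bound on $[\CU:\CH]$, which is precisely where the ``sufficiently saturated'' hypothesis on $\CM$ is used essentially. The remaining ingredients --- the \Vdef presentation of $\la X\ra$, the step from left-genericity to full dimension, and the standard topological-group fact that open subgroups are clopen --- are routine.
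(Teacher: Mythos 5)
Your proof is correct, but it takes a noticeably different (and longer) route than the paper on both halves. For part (1) the paper argues in two lines: since $\CH$ has bounded index, its complement in $\CU$ is a bounded union of cosets, each a bounded union of definable sets, so $\CU\setminus\CH$ is itself \Vdef; then $Y\cap\CH$ and $Y\setminus\CH$ are two \Vdef sets partitioning the definable set $Y$, and a single application of compactness forces each to be definable. Your two-step argument --- first showing $Y$ meets only finitely many cosets by transfinitely extracting new cosets via saturation, then extracting a single index $j_0$ with $Y\sub\bigcup_k g_kH_{j_0}$ --- reaches the same conclusion and is sound (the only care needed is the one you flag: at each stage the type has at most $[\CU:\CH]\cdot|I|<\kappa$ formulas, so the iteration runs to $[\CU:\CH]^+$ and overshoots the index), but it essentially reproves by hand the compactness fact that the paper gets for free from the complement being \Vdef. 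For part (2) the paper simply notes that $\CV=\la X\ra$ is compatible of bounded index, hence $\dim\CV=\dim\CU$, and then cites \cite[Proposition 1]{BE} to conclude $\CV=\CU$; you instead unwind that citation, proving directly that genericity forces $\dim X=\dim\CU$, hence $X$ has interior in the group topology, hence $\la X\ra$ is an open (therefore clopen) compatible subgroup, which connectedness identifies with $\CU$. Your version is self-contained where the paper outsources the topological step; the paper's version is shorter and isolates the reusable fact that a definable set partitioned into two \Vdef pieces has both pieces definable.
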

 \proof (1) Assume that $\CV$ is a \Vdef subgroup of bounded index. We need to see that
 for every definable $Y\sub \CU$, the set $Y\cap \CV$ is definable. Since $\CV$ has
 bounded index in $\CU$ its complement in $\CU$ is also a bounded union of definable sets, hence
 a \Vdef set. But then $Y\cap \CV$ and $Y\setminus \CV$ are both \Vdef sets, so by
 compactness $Y\cap \CV$ must be definable.

(2) Assume now that $\CU$ is a \Vdef connected group in an o-minimal structure and
$X\sub \CU$ is a left-generic set. By (1), the group $\CV$ generated by $X$ is
compatible, of bounded index. But then $\dim \CV=\dim \CU$, so by \cite[Proposition 1]{BE},  $\CV=\CU$.
 \qed

\begin{fact} \label{generic1}
Let $\la \CU,+\ra $ be an abelian, definably generated group. If $X\sub \CU$ is a
definable set then $X$ is generic if and only if there exists a finitely generated
(in particular countable) group $\Gamma\leqslant \CU$ such that $\CU=X+\Gamma$.
\end{fact}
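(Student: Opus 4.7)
The plan is to prove both directions, with the forward direction requiring a little arithmetic trick to upgrade "countable" to "finitely generated."

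The easy direction is $(\Leftarrow)$: if $\CU = X + \Gamma$ with $\Gamma$ finitely generated, then $\Gamma$ is countable, hence small, so $\CU = \bigcup_{\gamma \in \Gamma}(\gamma + X)$ exhibits $X$ as left-generic. Since $\CU$ is abelian, $\gamma + X = X + \gamma$, so $X$ is also right-generic, hence generic.

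For the converse $(\Rightarrow)$, the key observation is that because $X$ is definable, so is the sum set $X+X$, and by the equivalent characterization of genericity just after the definition, \emph{any} definable subset of $\CU$ is covered by finitely many translates of $X$. In particular, there exists a finite $F_2 \subseteq \CU$ with
\[
X + X \;\subseteq\; F_2 + X.
\]
A quick induction on $n$ then gives $nX \subseteq (n-1)F_2 + X$ for all $n \geq 1$, since
\[
(n+1)X = nX + X \;\subseteq\; (n-1)F_2 + X + X \;\subseteq\; (n-1)F_2 + F_2 + X = nF_2 + X.
\]

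Now I would exploit that $\CU$ is definably generated: fix a definable $Y \subseteq \CU$ generating $\CU$ as a group, and put $Y' = Y \cup (-Y) \cup \{0\}$. Then $Y'$ is definable, symmetric, and $\CU = \bigcup_n nY'$. Applying genericity of $X$ to the definable set $Y'$ gives a finite $F_1 \subseteq \CU$ with $Y' \subseteq F_1 + X$. Combining this with the induction above,
\[
nY' \;\subseteq\; nF_1 + nX \;\subseteq\; nF_1 + (n-1)F_2 + X,
\]
so taking the union over $n$ yields $\CU \subseteq \langle F_1 \cup F_2\rangle + X$. Setting $\Gamma := \langle F_1 \cup F_2\rangle$ gives a finitely generated subgroup with $\CU = X + \Gamma$, as desired.

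The only real obstacle is the temptation to build $\Gamma$ by iterating covers of $nY'$ separately for each $n$, which only produces a countable $\Gamma$. The fix is the observation that one finite cover of $X+X$ by translates of $X$ suffices to inductively control all $nX$ simultaneously, and that this finite data together with the finite cover of a single definable generator $Y'$ gives everything one needs.
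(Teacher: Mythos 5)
Your proof is correct and follows essentially the same route as the paper's: the paper likewise fixes one finite set $F$ with $Y$, $-Y$ and $X+X$ all contained in $X+F$ and takes $\Gamma=\langle F\rangle$, which is just your $F_1\cup F_2$ packaged into a single set. Your version merely spells out the induction on $nX\subseteq (n-1)F_2+X$ that the paper leaves implicit.
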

\begin{proof} Clearly, if $\Gamma $ exists then $X$ is generic. For the converse, assume that $\CU$ is  generated by the
definable set $Y\sub \CU$, with $0\in Y$. Because $X$ is generic in $\CU$, there is a
finite set $F\sub \CU$ such that the sets $-Y$, $Y$ and $X+X$ are all contained in $X+F$.

Let $Y(n)$ be as in the notation from Section \ref{notation}.
If we now let $\Gamma$ be the group generated by $F$, then $\CU=\bigcup_n
Y(n)=X+\Gamma$.
\end{proof}

We next show that under some suitable conditions we can guarantee the
 existence of $\CU^{00}$. We do it first in the general context of NIP theories. We recall  a definition \cite{Pet-Pi}:

\begin{defn} Given a \Vdef group $\CU$ and a definable set $X\sub \CU$,
$$Stab_{ng}(X)=\{g\in \CU: gX\Delta X
\mbox{ is non-generic in } \CU \}.$$ \end{defn}

\begin{theorem}\label{NIP-vdefinable} Let $\CU$ be an abelian $\bigvee$-definable group
in a NIP structure $\CM$. Assume that the non-generic definable subsets of $\CU$
form an ideal and that $\CU$ contains some definable generic set. Then for any
definable generic set $X$, the set $Stab_{ng}(X)$ is a type-definable group and has
bounded index in $\CU$. In particular, by \cite[Proposition 7.4]{HPP}, $\CU^{00}$ exists.
\end{theorem}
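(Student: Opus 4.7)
The plan is to establish that $Stab_{ng}(X)$ is (i) a subgroup of $\CU$, (ii) type-definable inside a single definable piece, and (iii) of bounded index; the cited \cite[Proposition 7.4]{HPP} then yields $\CU^{00}$. A preliminary observation places $Stab_{ng}(X)$ inside the definable set $XX^{-1}$: any superset of a generic set is generic, so if $g \notin XX^{-1}$ then $gX \cap X = \emptyset$ and $gX \Delta X \supseteq X$ is generic, whence $g \notin Stab_{ng}(X)$. For (i), non-genericity is translation-invariant (if a small family $\{a_i\}$ witnesses that translates of $gZ$ cover $\CU$, then $\{a_i g\}$ witnesses the same for $Z$), so the ideal hypothesis together with the inclusions $ghX \Delta X \subseteq g(hX \Delta X) \cup (gX \Delta X)$ and $g^{-1}X \Delta X = g^{-1}(X \Delta gX)$ gives closure under multiplication and inversion.

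For (ii), I would first prove an auxiliary characterisation adapted to the \Vdef setting: a definable $Z \subseteq \CU$ is non-generic if and only if $X$ is not contained in any finite union of $\CU$-translates of $Z$. The forward direction applies saturation to the restriction of a small cover $\CU = \bigcup_{b \in B} bZ$ to the definable set $X$; the converse uses $\CU = AX$ for some small $A$ (by genericity of $X$) to amplify a finite cover of $X$ by translates of $Z$ into a small cover of $\CU$. Applied to $Z = gX \Delta X$, this rewrites membership in $Stab_{ng}(X)$ as the conjunction, over all $k \in \N$ and all tuples of definable pieces $(U_{i_1}, \dots, U_{i_k})$ of $\CU$, of the definable formula $\forall h_1 \in U_{i_1} \cdots \forall h_k \in U_{i_k} \, (X \not\subseteq \bigcup_j h_j (gX \Delta X))$. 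Intersecting with $XX^{-1}$ exhibits $Stab_{ng}(X)$ as a type-definable set inside a single definable piece, as required by the paper's convention on $\CU^{00}$.

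The main obstacle is (iii), bounded index, which is where NIP is essential. The strategy, adapting the arguments of Shelah and \cite{Pet-Pi} to the \Vdef setting, is to assume $\CU / Stab_{ng}(X)$ is unbounded and derive an alternation contradicting NIP. By saturation one extracts a long sequence $(g_\alpha)_{\alpha < \lambda}$ of distinct coset representatives and refines it to an indiscernible one; then for every $\alpha \neq \beta$ the symmetric difference $g_\alpha X \Delta g_\beta X$, being a translate of $g_\alpha^{-1} g_\beta X \Delta X$, is generic (since $g_\alpha^{-1} g_\beta \notin Stab_{ng}(X)$ and the non-generic sets form an ideal). The formula $\phi(x; y) := x \in yX$ is NIP, so the family $\{gX : g \in \CU\}$ has bounded VC-dimension; combined with indiscernibility, the genericity of every pairwise symmetric difference forces an alternation of $\phi$ of length $\lambda$ along $(g_\alpha)$, contradicting NIP. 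The \Vdef nature of $\CU$ introduces no essential additional difficulty, since the relevant translates and symmetric differences all live inside the definable piece $XX^{-1}$ produced above, where standard NIP/VC tools apply directly.
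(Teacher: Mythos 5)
Your steps (i) and (ii) --- that $Stab_{ng}(X)$ is a subgroup contained in $XX^{-1}$, and that it is type-definable via the characterisation ``$Z$ is generic iff finitely many translates of $Z$ cover $X$'' --- match the paper's proof; your bounded conjunction over tuples of definable pieces is a harmless variant of the paper's observation that the relevant translating elements can be confined to the definable set $XX^{-1}\cup X(gX)^{-1}$.

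The gap is in (iii), and it sits exactly where the paper's proof has its real content. From the indiscernible sequence $(g_\alpha)$ you correctly get that each $X_i=g_{2i}X\Delta g_{2i+1}X$ is generic, and NIP does give (by indiscernibility and compactness) that the family $\{X_i\}$ is $k$-inconsistent for some $k$: a point of $\bigcap_{j\le k}X_{i_j}$ would produce an alternation of $x\in yX$ of length $k$ along the sequence. But your final step asserts that genericity of every $X_i$ ``forces an alternation of length $\lambda$'', i.e.\ that infinitely many generic subsets of a fixed definable set cannot be $k$-wise disjoint, and you offer only ``standard NIP/VC tools apply directly'' as justification. They do not: in the definable, definably compact case this step is carried by a left-invariant Keisler measure under which generic sets have measure bounded below, but the paper explicitly notes that for a \Vdef group a generic set may need infinitely many translates to cover $\CU$, so one cannot conclude positive measure even if the group is amenable, and the VC/$\varepsilon$-net machinery has no measure to feed on. What is needed, and what your proposal is missing, is a purely combinatorial substitute: the paper proves that in an arbitrary abelian group at most $N=k\ell$ subsets of a set $A$ can each cover $A$ by $\ell$ translates while every $k$ of them intersect trivially (proved by passing to $G\times C_k$ to make the sets pairwise disjoint and invoking the Komj\'ath--Totik fact that one cannot find $m+1$ pairwise disjoint subsets of $A$ each covering $A$ by $m$ translates). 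Since indiscernibility gives a single $\ell$ working for all $X_i\sub WX$, this lemma yields the contradiction. Without some such counting principle your argument does not close.
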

\proof The fact the definable non-generic sets form an ideal implies that for every
definable set $X$, the set $Stab_{ng}(X)$ is a subgroup. Note however that if $X$ is
a non-generic set then $Stab_{ng}(X) =\CU$ and therefore will not in general be
type-definable (unless $\CU$ itself was definable).

We assume now that $X\sub \CU$ is a definable generic set and show that $Stab_{ng}(X)$
is type-definable. First note that for every $g\in \CU$, if $gX\Delta X$ is
non-generic, then in particular $gX\cap X\neq \emptyset$  and therefore $g\in
XX^{-1}$. It follows that $Stab_{ng}(X)$ is contained in $XX^{-1}$.

Next, note that a subset of $\CU$ is generic if and only if finitely many translates
of it cover $X$ (since $X$ itself is generic). Now, for every $n$, we consider the
statement in $g$: ``$n$ many translates of $gX\Delta X$ do not cover $X$''. Here
again we note that for $h(gX\Delta X)\cap X$ to be non-empty we must have $h\in
XX^{-1}\cup X(gX)^{-1}$. Hence, it is sufficient to write the first-order formula
saying that for every $h_1,\ldots, h_n\in XX^{-1}\cup X(gX)^{-1}$, $X\nsubseteq
\bigcup_{i=1}^n h_i(gX\Delta X)$.  The union of all these formulas for every $n$,
together with the formula for $XX^{-1}$ is the type which defines $Stab_{ng}(X)$.

It remains to see that $Stab_{ng}(X)$ has bounded index in $\CU$. This is a similar
argument to the proof of \cite[Corollary 3.4]{HPP} but in that paper the amenability
of definable groups and, as a result, the fact that every generic set has positive
measure, played an important role. Since a generic subset of  a \Vdef group may
require infinitely many translates to cover the group, we cannot a-priori conclude
that it has positive measure, even if the group is amenable.
 Assume then towards contradiction that $Stab_{ng}(X)$ had unbounded index and fix a small elementary substructure $\CM_0$
 over which all data is definable. Then we can find a
sequence  $g_1,\ldots, g_n,\ldots\in \CU$ of indiscernibles over
$\CM_0$, which are all in different cosets of $Stab_{ng}(X)$. In
particular, it means that $g_iX \Delta g_jX$ is generic, for
$i\neq j$.

Consider now the sequence $X_i=g_{2i}X\Delta g_{2i+1}X$, $i\in \N$. By NIP, there is
a $k$, such that the sequence $\{X_i:i\in \mathbb N\}$ is $k$-inconsistent.

Consider now the type $tp(g_i/M_0)$ and find some $M_0$-definable set $W$ containing
$g_i$. Because of indiscernibility, all $g_i$'s are in $W$. It follows that all the
$g_iX$, and therefore also all $X_i$, are contained in $WX$. Because each $X_i$ is
generic, finitely many translates of $X_i$ cover $WX$. By indiscernibility,   there
is some $\ell$ such that for every $i$ there are $\ell$-many translates of $X_i$
which cover $WX$.

We then have countably many sets $X_i\sub WX$, such that on one hand the intersection
of every $k$ of them is empty and on the other hand there is some $\ell$ such that
for each $i$, $\ell$-many translates of $X_i$ cover $WX$. To obtain a contradiction
it is sufficient to prove the following lemma (it is here that we need to find an
alternative argument to  the measure theoretic one):
\begin{lemma} Let $G$ be an arbitrary abelian group, $A\sub G$ an
arbitrary subset. For every $k$ and $\ell$ there is a fixed number $N=N(k,\ell)$
such that there are at most $N$ subsets of $A$ with the property that each covers
$A$ with $\ell$-many translates and every $k$ of them have empty intersection.
\end{lemma}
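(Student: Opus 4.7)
The plan is to prove $N(k,\ell) = \ell(k-1)$ by a double-counting / fiber-bound argument (avoiding invariant measures on $G$, as flagged in the sentence preceding the lemma). Let $X_1,\dots,X_n \subseteq A$ satisfy the hypotheses and for each $i$ fix translates $g_{i,1},\dots,g_{i,\ell} \in G$ with $A \subseteq \bigcup_{j=1}^\ell g_{i,j}X_i$. Two observations are central. First, since any $k$ of the $X_i$ have empty intersection, every $a \in A$ lies in at most $k-1$ of the $X_i$. Second, for each $i$, choosing (for every $a \in A$) some $j_i(a) \in [\ell]$ with $g_{i,j_i(a)}^{-1}a \in X_i$ defines a map $b_i : A \to X_i$ whose fibers have size at most $\ell$, since $b_i^{-1}(x) \subseteq \{g_{i,j}x : j \in [\ell]\}$.

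When $A$ is finite and nonempty, these two facts combine at once via double-counting: the first gives $\sum_{i=1}^n |X_i| = \sum_{a \in A}|\{i : a \in X_i\}| \leq (k-1)|A|$, and the second gives $|X_i| \geq |A|/\ell$; together they yield $n|A|/\ell \leq (k-1)|A|$, so $n \leq \ell(k-1)$. For the general case, I would pass to the finitely generated (hence amenable) abelian subgroup $\Gamma = \langle g_{i,j} : i \in [n], j \in [\ell]\rangle \leq G$ and apply the same double-count on the finite incidence set
\[
\Psi_F = \{(i,j,a) : a \in Fa_0 \cap A,\ g_{i,j}^{-1}a \in X_i\},
\]
where $F$ ranges over a Følner sequence in $\Gamma$ and $a_0$ is a fixed element of $A$. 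The covering hypothesis gives $|\Psi_F| \geq n|Fa_0 \cap A|$, while grouping $\Psi_F$ by the value $b = g_{i,j}^{-1}a \in X_i$ and using both observations gives $|\Psi_F| \leq \ell(k-1)\,|A \cap F^*a_0|$, where $F^* = \bigcup_{i,j} g_{i,j}^{-1}F$ satisfies $|F^* \setminus F|/|F| \to 0$ by the Følner property. Dividing and taking the limit along the Følner sequence recovers $n \leq \ell(k-1)$ whenever $|Fa_0 \cap A|/|F|$ stays bounded away from $0$, i.e.\ whenever $A$ has positive upper Banach density in some $\Gamma$-orbit of $G$.

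The main obstacle is the degenerate case in which $A$ has zero upper Banach density in every $\Gamma$-orbit of $G$, so the Følner estimate is vacuous and every invariant mean on $\Gamma$ assigns $A$ measure zero — this is precisely where the measure-theoretic heuristic breaks down. I expect to dispose of this case either by ruling it out (the self-covering $A \subseteq \bigcup_j g_{i,j}A$ in a finitely generated abelian group should preclude sparse infinite configurations, forcing positive density at an infinite $A$), or by a direct finitary argument: starting from any $a_0 \in A$, iteratively applying all the $b_i$'s produces a countable $A_0 \subseteq A$ stable under the maps $b_i$, on which the fiber bound $|b_i^{-1}(x)| \leq \ell$ and the no-$k$-intersection property still apply, and a tree-counting pigeonhole on finite exhaustions of $A_0$ recovers the same bound $n \leq \ell(k-1)$.
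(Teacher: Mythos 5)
Your finite case is correct (and even gives a better constant, $\ell(k-1)$, than the paper's $k\ell$), but the argument as a whole has a genuine gap: the ``degenerate case'' you flag at the end is not a loose end to be disposed of --- it is the entire content of the lemma, and neither of the two escapes you sketch goes through. Your incidence count only yields $n\,|Fa_0\cap A|\le \ell(k-1)\bigl(|Fa_0\cap A|+|A\cap (F^*\setminus F)a_0|\bigr)$, and to conclude you need the error term to be small relative to $|Fa_0\cap A|$, i.e.\ a F\o lner condition \emph{relative to $A$}; the usual condition $|F^*\setminus F|/|F|\to 0$ only helps when $A$ has positive upper Banach density in the relevant $\Gamma$-orbit. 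The first escape (``self-covering forces positive density'') is false as stated: $A\subseteq\bigcup_j g_{i,j}A$ puts no constraint on the density of $A$ (some $g_{i,j}$ may be the identity, and in any case $\Gamma$ can be much larger than the subgroup near which $A$ concentrates, e.g.\ $A=\Z\times\{0\}$ with $\Gamma=\Z^2$); uniformly sparse sets such as $\{a_n\}\subseteq\Z$ with $a_{n+1}-a_n\to\infty$ satisfy the hypotheses with the identity translate and have zero upper Banach density everywhere. The second escape (a tree-counting pigeonhole on finite exhaustions of a $b_i$-stable countable set $A_0$) runs into the same obstruction in disguise: finite exhaustions of $A_0$ are not approximately invariant under the maps $b_i$, and the boundary terms you would need to discard are not controlled relative to the count. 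In effect you have rederived exactly the difficulty the authors point out just before the lemma --- that the measure/density heuristic breaks down because a generic set in a $\bigvee$-definable group need not have positive measure --- and stopped at the point where the alternative argument is required. (An invariant mean normalised so that $A$ has measure $1$ is a supramenability statement for abelian groups; establishing it is essentially equivalent to the combinatorial fact the paper imports, so it cannot be waved through.)

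For contrast, the paper sidesteps density entirely: it quotes the Komj\'ath--Totik fact that in an abelian group one cannot find $m+1$ \emph{pairwise disjoint} subsets of $A$ each covering $A$ by $m$ translates, and reduces the ``every $k$ have empty intersection'' hypothesis to pairwise disjointness by passing to $G\times C_k$ and tagging each point of $X_i$ with its multiplicity among the preceding $X_j$'s; this yields $N=k\ell$. If you want to salvage your route, the missing ingredient is precisely a proof of that fact (equivalently, of supramenability of abelian groups, which ultimately rests on the subexponential growth of finitely generated abelian groups) --- that is where the abelian hypothesis actually does its work.
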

\proof We are going to use the following fact about abelian groups, taken from
\cite{KT} (see problems 7 and 16 on p. 82):


\begin {fact}\label{abelian}  For every abelian group $G$, and for every set $A\sub
G$ and
 $m$, it is not possible to find $A_1,\dots A_{m+1}\sub A$ pairwise disjoint
 such that each $A_i$ covers $A$ by $m$-many translates.
\end{fact}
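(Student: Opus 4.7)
My plan is to argue by contradiction, building from the hypothetical covering data a sequence of finite subsets of $A$ whose sizes grow exponentially, while abelianness forces them to lie inside a set of only polynomial size. So suppose disjoint $A_1,\dots,A_{m+1}\subseteq A$ exist, with witnesses $g_{i,j}\in G$ satisfying $A\subseteq\bigcup_{j=1}^m(g_{i,j}+A_i)$ for each $i$. Set $S=\{g_{i,j}\}$, a finite subset of $G$ of cardinality at most $k:=m(m+1)$.

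The first step will be a local lower bound. For every finite $F\subseteq A$ and each $i$, I choose for each $x\in F$ an index $j_i(x)\in\{1,\dots,m\}$ with $x-g_{i,j_i(x)}\in A_i$, and look at the map $\psi_i:F\to A_i\cap(F-S)$ sending $x\mapsto x-g_{i,j_i(x)}$. A direct count (for a fixed image $a$ there are only $m$ possible preimages, one per value of $j_i$) gives $|A_i\cap(F-S)|\geq |F|/m$. Summing over $i$ and using that the $A_i$ are pairwise disjoint subsets of $A$, I obtain a set $F'\subseteq A$ with $F'\subseteq F-S$ and $|F'|\geq\tfrac{m+1}{m}|F|$.

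Starting from any nonempty finite $F_0\subseteq A$ and iterating the passage $F\leadsto F'$ produces subsets $F_n\subseteq A$ with $|F_n|\geq\bigl(\tfrac{m+1}{m}\bigr)^n|F_0|$ and $F_n\subseteq F_0-nS_+$, where $nS_+=\{s_1+\dots+s_n:s_i\in S\}$ is the $n$-fold sumset. Here is where abelianness of $G$ is essential: each element of $nS_+$ is determined by the multiset of its summands, so $|nS_+|\leq\binom{n+k-1}{k-1}=O(n^{k-1})$. Hence $|F_n|\leq |F_0|\cdot O(n^{k-1})$, which contradicts the exponential lower bound for $n$ large enough.

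The main obstacle, I expect, is pinpointing where commutativity of $G$ enters: in a nonabelian group the iterated sumset $nS_+$ can have $|S|^n$ elements and the polynomial/exponential dichotomy collapses. Once the multiset bound $\binom{n+k-1}{k-1}$ is recognized as the right packaging of the abelian hypothesis, the rest of the argument is routine bookkeeping.
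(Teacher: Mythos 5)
Your proof is correct, but there is nothing in the paper to compare it against line by line: the authors do not prove Fact~\ref{abelian} at all, they import it from Komj\'ath--Totik \cite{KT} (problems 7 and 16 on p.~82) and use it as a black box in the lemma that follows. What you have written is a self-contained combinatorial substitute, and it holds up. The local step is sound: since $A\subseteq\bigcup_{j=1}^{m}(g_{i,j}+A_i)$, your map $\psi_i$ is at most $m$-to-one (a point $a$ of the image has preimages only among $a+g_{i,1},\dots,a+g_{i,m}$), so $|A_i\cap(F-S)|\ge|F|/m$; pairwise disjointness of the $A_i$ then yields a finite $F'\subseteq A\cap(F-S)$ with $|F'|\ge\tfrac{m+1}{m}|F|$, and iterating traps sets of size at least $\bigl(\tfrac{m+1}{m}\bigr)^{n}|F_0|$ inside $F_0-nS_+$, whose cardinality is at most $|F_0|\binom{n+k-1}{k-1}$ because an $n$-fold sum from $S$ in an abelian group depends only on the multiset of summands. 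That multiset bound is indeed the exact point where commutativity enters; in fact your argument only uses that the finitely generated subgroup $\langle S\rangle$ has polynomial (or merely subexponential) growth, so it proves the Fact for a somewhat wider class of groups. It is worth noting that this polynomial-versus-exponential argument is an elementary replacement for the classical route via a translation-invariant finitely additive measure normalized on $A$ (amenability of abelian groups), which is precisely the kind of measure-theoretic argument the authors say, just before invoking the Fact, that they must avoid in the locally definable setting. Two harmless provisos: you need $m\ge1$ and a nonempty $F_0$, hence $A\neq\emptyset$ (the case $A=\emptyset$ is degenerate --- all $A_i=\emptyset$ satisfy the hypotheses --- and in the paper's application the relevant sets are generic, hence nonempty), and $F'$ is again finite because $F-S$ is.
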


Returning to the proof of the lemma,  we are going to show that $N=k\ell$ works.
Assume for contradiction that there are $k\ell+1$ subsets $X_1, \dots, X_{kl+1}$ of
$A$, each covering $A$ by $\ell$-many translates, with an empty intersection of
every $k$ of them. We work with the group $G'=G\times C_{k}$, where
$C_{k}=\{0,\ldots, k-1\}$ is the cyclic group. For $i=1,\ldots, k\ell+1$, we
define $Y_i\sub G'$ as follows: For $x\in G$ and $n\in \mathbb N$, we have $(x,n)\in
Y_i$ if and only if $x\in X_i$ and $n$ is the maximum number such that for some distinct
$i_1,\ldots, i_n<i$, we have $x\in X_{i_1}\cap\cdots \cap X_{i_n}\cap X_i$. Notice
that even though $i$ might be larger than $k$, because of our assumption that
every $k$ sets among the $X_i$'s intersect trivially, the maximum $n$ we pick is
indeed at most $k-1$. Note also that the projection of each $Y_i$ on the first
coordinate is $X_i$.

We claim that the $Y_i$'s are pairwise disjoint. Indeed, if $x\in X_i\cap X_j$ and
$i<j$ then by the definition of the sets, if $(x,n)\in Y_i$ and $(x,n')\in Y_j$ then
$n<n'$, so $Y_i\cap Y_j=\emptyset$.

Now, let $A'=A\times C_{k}$. We claim that each $Y_i$ covers $A'$ by $k\ell$-many
translates. Indeed, if $A\sub \bigcup_{j=1}^\ell g_{ij}\cdot X_i$ then $$A'\sub
\bigcup_{p\in C_{k}}\bigcup_{j=1}^\ell (g_{ij},p)\cdot Y_i.$$

We therefore found $N+1$ pairwise disjoint subsets of $A'$, each covering $A'$ in
$N$ translates, contradicting Fact \ref{abelian}.\qed

Thus, as pointed out above we reached a contradiction, so
$stab_{ng}(X)$ does have bounded index in $\CU$. This ends the
proof of Theorem \ref{NIP-vdefinable}.\qed

\begin{rmk}\label{remark2} The last theorem implies that for a \Vdef abelian group $\la \CU,+\ra$
in a NIP structure, if the non-generic definable sets form an
ideal, then $\CU^{00}$ exists if and only if $\CU$ contains a
definable generic set (we have just proved the right-to-left
direction. The converse is immediate since every definable set
containing $\CU^{00}$ is generic).
\end{rmk}

We are now ready to show  (Corollary \ref{defcomp} below) that
when we work in o-minimal expansions of ordered groups,
 for a \Vdef abelian group which contains a definable generic set and is generated by a definably compact set,
 the assumptions of Theorem \ref{NIP-vdefinable} are satisfied.
We begin by proving that we can obtain Dolich's result in this
setting.

\begin{fact}\label{Dolich1} Let $\CM$ be an o-minimal expansion of an ordered group
and let $\CM_0\preccurlyeq\CM$ be a small elementary submodel.
If $\CU$ is a \Vdef group over $\CM_0$ and $X_t\sub \CU$ is a $t$-definable,
definably compact set such that $X_t\cap M_0=\emptyset$, then there are $t_1,\ldots,
t_k$, all of the same type as $t$ over $M_0$ such that $X_{t_1}\cap \cdots\cap
X_{t_k}=\emptyset$.
\end{fact}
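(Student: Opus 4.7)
The plan is to reduce this $\bigvee$-definable statement to the classical Dolich-type theorem for definable subsets of $M^n$ in o-minimal expansions of ordered groups, available from \cite{Dolich}, \cite{ElSt}, and \cite[Section 8]{pet-sbd}. The only new ingredient required is the confinement of $X_t$ to a single $M_0$-definable piece of $\CU$, after which the $\bigvee$-definable data disappears and we are back in the classical situation.

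First I would write $\CU = \bigcup_{i \in I} Z_i$ as a directed small union of $M_0$-definable subsets of $M^n$, which is possible because $\CU$ is $\bigvee$-definable over $M_0$. The inclusion $X_t \sub \CU$ says that the family of definable sets $\{X_t \setminus Z_i : i \in I\}$ has empty intersection; as $|I|$ is smaller than the saturation degree of $\CM$, saturation forces some finite subfamily to be empty, and directedness of $\{Z_i\}$ then yields a single $i_0$ with $X_t \sub Z_{i_0}$. Thus $X_t$ becomes a $t$-definable, definably compact subset of an $M_0$-definable set $Z_{i_0} \sub M^n$ satisfying $X_t \cap M_0 = \emptyset$, and the Dolich-type theorem immediately produces $t_1,\ldots,t_k$ realizing $\mathrm{tp}(t/M_0)$ with $X_{t_1} \cap \cdots \cap X_{t_k} = \emptyset$.

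The delicate point is how definable compactness is to be interpreted: on $\CU$ the relevant topology is the group topology $\tau$ from \cite[Theorem 4.8]{BaOt}, while Dolich's theorem is formulated in the ambient topology of $M^n$. Since $\tau$ agrees with the pullback of the ambient $M^{\dim \CU}$-topology on each of the finitely many $M_0$-definable charts covering $Z_{i_0}$, one can if necessary apply Dolich's theorem chart by chart to the pieces $X_t \cap U_j$ and combine the finitely many resulting families of conjugates into a single finite family whose intersection is empty. This reconciliation between the two topologies is where I expect the bookkeeping, but no genuinely new idea, to be required.
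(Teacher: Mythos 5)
Your reduction to a single $M_0$-definable $Z_{i_0}$ is fine (and is implicit in the paper), and you have correctly located the real difficulty: definable compactness of $X_t$ is with respect to the group topology $\tau$, while the Dolich-type input from \cite{Dolich}, \cite{ElSt} and \cite{pet-sbd} needs sets whose images are closed and bounded in the ambient topology of $M^n$. But your proposed resolution has a genuine gap. The pieces $X_t\cap U_j$ obtained by intersecting with the open charts are in general \emph{not} definably compact: the intersection of a compact set with an open set is typically neither closed nor compact (already $[0,1]\cap(1/2,2)=(1/2,1]$ fails), so the classical theorem cannot be applied to them, chart by chart or otherwise. This is exactly the point at which the paper inserts a step that your proposal omits: having covered $X_t$ by finitely many $M_0$-definable charts $V_1,\dots,V_m$, one uses the definable compactness of $X_t$ to \emph{shrink} the cover, producing open sets $W_i$ with $Cl(W_i)\sub V_i$ and $X_t\sub W_1\cup\dots\cup W_m$; the pieces $X(i)=X_t\cap Cl(W_i)$ are then definably compact \emph{and} contained in a single chart, so their images in $M^{\dim\CU}$ are closed and bounded and the classical result applies to each of them.

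A second, smaller issue is that ``combine the finitely many resulting families of conjugates'' is not mere bookkeeping. If $X_t=\bigcup_i X(i)_t$ and for each $i$ some conjugates of $X(i)_t$ have empty intersection, it does not formally follow that some conjugates of $X_t$ have empty intersection: a point of $\bigcap_p X_{t_p}$ may lie in different pieces $X(i)_{t_p}$ for different $p$. Passing from the pieces to their union is essentially the content of \cite[Lemma 3.10]{ElSt}, which is precisely what the paper invokes to finish. So your overall strategy is the right one, but both the shrinking of the charts and the union step require arguments that your sketch does not supply.
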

\begin{proof} We need to translate the problem from the group topology to the $M^n$-topology.
As we already noted it is shown  in \cite{BaOt} that $\CU$ can be covered by a fixed
collection of $\CM_0$-definable open sets $\bigcup_i V_i$ such that each $V_i$ is
definably homeomorphic to an open subset of $M^n$. By logical compactness, $X_t$ is
contained in finitely many $V_i$'s, say $V_1,\cdots,V_m$. Now, by definable
compactness, we can  replace each of the $V_i$'s by an open set $W_i$ such that
$Cl(W_i)\sub V_i$ and $X_t$ is still contained in $W_1,\ldots, W_m$. Each
$X(i)=X_t\cap Cl(W_i)$ is definably compact and we finish the proof as in
\cite[Lemma 3.10]{ElSt}.
\end{proof}

For a \Vdef group $\CU$, we call a definable $X\sub \CU$ {\em relatively definably
compact} if the closure of $X$ in $\CU$ is definably compact. Clearly, $X$ is
relatively definably compact if and only if it is contained in some definably
compact subset of $\CU$.
\begin{lemma}\label{Dolich} Let $\CM$ be an o-minimal expansion of an ordered group.
 Assume that $\CU$ is
a $\bigvee$-definable abelian group, and $X,Y\sub \CU$ are definable, with $X$
relatively definably compact. If $X$ and $Y$ are non-generic, then $X\cup Y$ is still
non-generic.
\end{lemma}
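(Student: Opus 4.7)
My plan is to argue by contradiction: assume $X$ is non-generic, $Y$ is non-generic, and $X \cup Y$ is generic, then derive a contradiction. First, by relative definable compactness, $X$ is contained in a definably compact set, and by replacing $X$ with its topological closure $\overline X$ in $\CU$ I may assume $X$ is definably compact. Here one must check that $\overline X$ remains non-generic: if it were generic, then finitely many translates of $\overline X$ would cover $\CU$; since $\overline X \setminus X$ is of strictly smaller dimension than $\CU$, no finite union of translates of $\overline X \setminus X$ covers $\CU$, so the same translates of $X$ must cover $\CU$ up to a lower-dimensional set, forcing $X$ itself generic, a contradiction.

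By Fact \ref{generic1}, since $X \cup Y$ is generic, there is a finitely generated subgroup $\Gamma \leqslant \CU$ with $\CU = (X+\Gamma) \cup (Y+\Gamma)$. Fix a small elementary submodel $\CM_0 \preccurlyeq \CM$ containing a generating set of $\Gamma$ and the parameters defining $X$, $Y$, $\CU$. Using saturation and the non-genericity of $X$, I would show that the partial type $\pi(v) = \{v \in \CU\} \cup \{v \notin b + X : b \in \CU(\CM_0)\}$ is consistent, and pick $g \in \CU$ realizing $\pi$. Since $\Gamma \subseteq \CM_0$, this $g$ lies outside $X + \Gamma$, so by the covering $g \in Y + \Gamma$.

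Now the $g$-definable translate $g - X$ is definably compact (being a translate of $X$) and, by construction, disjoint from $\CM_0$. Applying Fact \ref{Dolich1} to it yields $g_1, \dots, g_k \in \CU$, each of the same type as $g$ over $\CM_0$, with $\bigcap_{i=1}^{k}(g_i - X) = \emptyset$. Passing to complements in $\CU$ gives
\[
\CU \;=\; \bigcup_{i=1}^{k}\bigl(g_i - (\CU \setminus X)\bigr),
\]
so $\CU$ is covered by $k$ translates of the $\bigvee$-definable set $\CU \setminus X$.

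The hard part, and the main obstacle, is to convert this last covering into a contradiction with the non-genericity of $Y$. The intended route is to note that $\CU \setminus X \subseteq (X + (\Gamma\setminus\{0\})) \cup (Y+\Gamma)$ and then, using the finite generation of $\Gamma$ together with logical compactness applied to each definable piece of the $\bigvee$-definable $\CU$, iteratively absorb the remaining $X$-translates back into a small collection of translates of $Y$. The delicate point is to ensure that the total number of $Y$-translates required stays small throughout this process; the uniformity furnished by Fact \ref{Dolich1} (the fixed constant $k$, independent of further choices) is precisely what makes this bookkeeping go through, mirroring the reasoning used in \cite{Pet-Pi} and \cite{ElSt} for the ideal of non-generic sets in the definably compact case.
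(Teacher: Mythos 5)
Your opening moves match the paper's: reduce to $X$ definably compact, pick $g$ avoiding every $\CM_0$-translate of $X$, and apply Fact \ref{Dolich1} to obtain $g_1,\dots,g_k$ realizing $tp(g/\CM_0)$ with $\bigcap_i(g_i-X)=\emptyset$. But the step you yourself flag as ``the hard part'' is a genuine gap, and the route you sketch for it does not go through. The covering $\CU=\bigcup_i\bigl(g_i-(\CU\setminus X)\bigr)$ is by translates of the $\bigvee$-definable set $\CU\setminus X$, which by itself carries almost no information: it only says that $\CU$ remains covered after deleting any one prescribed translate of $X$ from the covering $(X+\Gamma)\cup(Y+\Gamma)$. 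The proposed ``iterative absorption'' cannot be completed: each pass eliminates one translate of $X$ at the cost of multiplying the whole configuration by $k$, there are infinitely many translates of $X$ inside $X+\Gamma$ to eliminate, and so no bound on the number of $Y$-translates is ever obtained. The uniform constant $k$ from Fact \ref{Dolich1} controls only a single application, not the bookkeeping across iterations.

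The missing idea is localization to a definable set before taking complements. Fix a definable generic $W$ and choose $g$ \emph{inside} $W$ with $(X-g)\cap M_0=\emptyset$; this is possible because $X$ non-generic means the $\CM_0$-translates of $X$ do not cover $W$. Then all the $g_i$ lie in $W$ (they realize $tp(g/\CM_0)$ and $W$ is $\CM_0$-definable), and the identity $W\setminus(X-g_i)=\bigl((W+g_i)\setminus X\bigr)-g_i\sub\bigl((W+W)\setminus X\bigr)-g_i$ converts $\bigcap_i(X-g_i)=\emptyset$ into a covering of $W$ by $k$ translates of the \emph{definable} set $(W+W)\setminus X$, whence $(W+W)\setminus X$ is generic. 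A short second step (cover $Z+Z$ by finitely many translates of $Z$, and apply the first step to the union of the corresponding translates of $X$, which is still non-generic and relatively definably compact) upgrades this to: $Z\setminus X$ is generic for every definable generic $Z\supseteq X$. Taking $Z=X\cup Y$ yields $Y$ generic, the desired contradiction. Separately, your justification that $\overline{X}$ remains non-generic is also incomplete: genericity requires only boundedly many translates, not finitely many, and covering $\CU$ by translates of $X$ ``up to a lower-dimensional set'' does not by itself make $X$ generic; this reduction needs its own argument, although the paper likewise asserts it without proof.
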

\proof This is just a small variation on the work in \cite{Pet-Pi}. Because
commutativity plays only a minor role we use multiplicative notation for possible
future use.

We may assume that $\CU$ contains a definable generic set (otherwise, the conclusion is trivial).

 We need to prove that if $X\sub \CU$ is definable, relatively definably compact and non-generic,
 and if $Z\supseteq X$ is definable and generic then $Z\setminus X$ is generic.

Fix $\CM_0$ over which all sets are definable. Without loss of generality, $X$ is
definably compact (since the closure of a non-generic set is non-generic).

We first prove the result for $Z$ of the form $W\cdot W$, when $W$ is generic. Since
$X$ is not generic, no finitely many translates of $X$ cover $W$ (because $W$ is
generic). It follows from logical compactness that there is $g\in W$ such that
$g\notin \bigcup_{h\in M_0} hX$. Changing roles, there is $g\in W$ such that
$Xg^{-1}\cap M_0=\emptyset$. We now apply Fact \ref{Dolich1} to the definably
compact set $Xg^{-1}$. It follows that there are $g_1,\ldots, g_r$, all realizing
the same type as $g$ over $\CM_0$, so in particular all are in $W$, such that
$Xg_1^{-1}\cap \cdots \cap Xg_r^{-1}=\emptyset.$ This in turn implies that
$\bigcup_{i=1}^r (W\setminus Xg_i^{-1})=W$. For each $i=1,\ldots,r$ we have
$$W\setminus Xg_i^{-1}=(Wg_i\setminus X)g_i^{-1}\sub
(WW\setminus X)g_i^{-1}.$$Therefore, it follows that $W$ is
contained in the finite union $\bigcup_{i=1}^r (WW\setminus X)
g_i^{-1}$ and since $W$ is generic it follows that $W W\setminus
X$ is generic, as needed (it is here that commutativity is used,
since left generic sets and right generic sets are the same).

We now consider an arbitrary definable  generic set $Z\sub \CU$, with $X\sub Z$
non-generic. Because $Z$ is generic, finitely many translates of $Z$ cover $Z\cdot
Z$. Namely, $Z\cdot Z\sub \bigcup_{i=1}^t h_iZ$. If $X'=\bigcup _{i=1}^t h_iX$ then
$X'$ is still non-generic (and relatively definably compact), so by the case we have
just proved, $ZZ\setminus X'$ is generic. However this set difference is contained in
$$\bigcup_{i=1}^t h_iZ\setminus \bigcup_{i=1}^t h_iX\subseteq \bigcup_{i=1}^t h_i(Z\setminus X),$$
hence this right-most union is generic. It follows that $Z\setminus X$ is
generic.\qed\vskip.2cm

\begin{cor}\label{defcomp} Let $\CM$ be an o-minimal expansion of an ordered group. Assume that $\CU$ is
a $\bigvee$-definable abelian group which contains a definable generic set and is generated by a definably compact set. Then
the definable non-generic subsets of $\CU$ form an ideal.
\end{cor}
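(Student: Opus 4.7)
\textbf{Proof plan for Corollary \ref{defcomp}.} The strategy is to reduce to Lemma~\ref{Dolich} by showing that under the present hypotheses, \emph{every} definable subset of $\CU$ is relatively definably compact. Once that is established, closure under finite union of the class of non-generic definable sets is immediate from Lemma~\ref{Dolich} applied to either of the two summands, while closure under subsets is trivial: if $X \subseteq Y$ and $Y$ is covered by boundedly many translates of any set, so is $X$.

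The first step is to fix, by hypothesis, a definably compact set $K \subseteq \CU$ which generates $\CU$ as a group. Replacing $K$ by $K \cup (-K) \cup \{0\}$, which is still definably compact, I may assume $K$ is symmetric and contains the identity. Then $\CU = \bigcup_{n \in \N} K(n)$ in the notation of Section~\ref{notation}. Each $K(n)$ is the image of $K^n$ under the iterated group operation, which is continuous with respect to the group topology $\tau$; since continuous definable images of definably compact sets are definably compact, each $K(n)$ is definably compact.

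The second step is to observe that every definable $X \sub \CU$ is contained in some $K(n)$, and hence is relatively definably compact. This is a standard saturation argument: the partial type $\{x \in X\} \cup \{x \notin K(n) : n \in \N\}$ cannot be realized in the sufficiently saturated model $\CM$, since any such realization would be an element of $X \sub \CU$ lying outside every $K(n)$, contradicting $\CU = \bigcup_n K(n)$; compactness of types together with the fact that $K(n) \subseteq K(n+1)$ then gives $X \subseteq K(n_0)$ for some $n_0$.

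With these two steps, given any two non-generic definable sets $X, Y \sub \CU$, both are relatively definably compact, so Lemma~\ref{Dolich} applies and $X \cup Y$ is non-generic. Combined with closure under subsets and the fact that $\emptyset$ is non-generic, this confirms that the non-generic definable subsets of $\CU$ form an ideal. The only substantive content beyond Lemma~\ref{Dolich} is the saturation argument that every definable subset of $\CU$ is relatively definably compact; this is the one place where the hypothesis that $\CU$ is generated by a definably compact set is genuinely used, and it is entirely routine once the closure of the class of definably compact sets under finite sums is noted.
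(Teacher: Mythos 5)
Your proposal is correct and follows exactly the route of the paper's own (very terse) proof: observe that, since $\CU$ is generated by a definably compact set, every definable subset of $\CU$ is contained in some definably compact $K(n)$ and hence is relatively definably compact, then invoke Lemma~\ref{Dolich}; your version merely spells out the saturation argument and the closure of definably compact sets under the group operation, which the paper leaves implicit. (One tiny quibble: your sentence justifying downward closure is garbled --- the correct contrapositive is that if boundedly many translates of $X\sub Y$ cover $\CU$ then so do the corresponding translates of $Y$ --- but the fact itself is trivial and your argument does not depend on the misstatement.)
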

\begin{proof}
Every
definable subset of $\CU$ must be relatively definably compact, because it is
contained in some definably compact set. Then apply Lemma \ref{Dolich}.
\end{proof}

\section{Divisibility, genericity and definable quotients}\label{sec-div}
\emph{In this section, $\cal{M}$ is a sufficiently saturated o-minimal expansion of an ordered group.}\vskip.2cm

\begin{proposition} \label{exponent}
If $\CU$ is an infinite  \Vdef  group of positive dimension, then it has unbounded
exponent. In particular, for every $n$, the subgroup of $n$ torsion points,
$\CU[n]$, is small.
\end{proposition}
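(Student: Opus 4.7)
The plan is to prove the stronger second statement — that $\CU[n]$ is small for every $n\ge 1$ — from which unbounded exponent follows immediately, since $\CU$, being $\bigvee$-definable of positive dimension in the saturated $\CM$, is large, so $\CU\ne\CU[n]$ for any $n$. Writing $\CU=\bigcup_{i\in I}X_i$ as a bounded directed union of definable sets, we have $\CU[n]=\bigcup_i(X_i\cap \CU[n])$, so it suffices to show that $X\cap\CU[n]$ is finite for each definable $X\subseteq\CU$; then $\CU[n]$ is a bounded union of finite sets, hence small.

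Suppose, for a contradiction, that $X\cap\CU[n]$ is infinite for some definable $X$ and $n\ge 1$. By o-minimality the set has dimension at least one, so I extract a definable $1$-dimensional subset $C\subseteq X\cap\CU[n]$ and, translating along one of its elements using the abelian structure of $\CU$, arrange $0\in C\subseteq \CU[n]$. The subgroup $\CH=\bigcup_m C(m)$ generated by $C$ is then a definably generated $\bigvee$-definable subgroup of $\CU$, contained in $\CU[n]$, of positive dimension and of exponent dividing $n$.

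It then remains to rule out such a positive-dimensional $\bigvee$-definable group of bounded exponent. For a \emph{definable} group $G$ in an o-minimal structure this is standard: the definably connected component $G^0$ is divisible, so $[n]\colon G^0\to G^0$ is a surjective definable homomorphism, and the dimension formula forces $G^0[n]$ to be $0$-dimensional, hence finite. To transfer this to $\CH$ I would use the Baro--Otero manifold topology: pick a definable open $V\ni 0$ in $\CH$ with a chart to an open $W\subseteq M^{\dim\CH}$, transport the group operation to a definable continuous local law $\star$ on $W$, and use that its $n$-fold iterate vanishes identically on $W$.

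The chief obstacle is then to exclude this behavior for a definable local group law on an open subset of $M^{\dim\CH}$. The two routes I would try are: (a) a local ``linearization'' argument exploiting that the ambient $(M,+)$ is divisible and torsion-free, to show that the $n$-fold iterate must, near the origin, take the form $n\cdot v+{}$lower order, which is nonzero for $v\ne 0$; or (b) embedding a small symmetric neighborhood of $0$ in $\CH$ into a genuinely definable group and invoking the definable case directly. Either way, the argument must stay strictly local, because divisibility of connected $\bigvee$-definable groups in general is precisely the unresolved Conjecture A from the introduction, so one cannot simply quote a ``divisibility of $\CH^0$'' statement for $\bigvee$-definable $\CH$.
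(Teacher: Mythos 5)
Your reduction is fine as far as it goes (and note it is ultimately circular in shape: to show $X\cap\CU[n]$ is finite you produce a positive-dimensional $\bigvee$-definable subgroup $\CH\sub\CU[n]$ of exponent dividing $n$, and ruling that out is exactly the ``unbounded exponent'' clause applied to $\CH$ --- which is also how the paper derives the ``in particular'' clause, by observing that $\CU[n]$ is itself a compatible $\bigvee$-definable subgroup to which the first clause applies). The genuine gap is the step you yourself flag as ``the chief obstacle'': showing that a definable local group law on an open subset of $M^k$, $k\geq 1$, cannot have its $n$-fold power map identically equal to the identity. Neither of your two routes closes this. Route (a) as stated has no content: without further structure there is no notion of ``$n\cdot v+{}$lower order'' for the iterate of an arbitrary definable continuous local operation, and divisibility and torsion-freeness of the ambient $(M,+)$ say nothing about a group law that need not be related to $+$ in any definable way. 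Route (b) presupposes an embedding of a local group into a definable group, which is itself a substantial unproved theorem, and is not how the paper proceeds.

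What actually fills the hole in the paper is the Peterzil--Starchenko Trichotomy Theorem: a neighborhood of $e_{\CU}$ is in definable bijection with an open subset of $V^k$ for an ordered vector space $V$, or of $R^k$ for a definable real closed field $R$. In the linear case the group operation is locally isomorphic to $+$ near $0$, so $x\mapsto kx$ is visibly non-constant. In the field case the product $M(x,y)$ is $R$-differentiable with differential $x+y$ at $(e,e)$, hence the differential of $x\mapsto x^n$ is $nx\neq 0$, so $x\mapsto x^n$ is not constant. Your proposal contains no substitute for this dichotomy --- no appeal to trichotomy, to local linearity in the vector-space case, or to differentiation of the power map in the field case --- so the core of the proof is missing.
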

\proof By the Trichotomy Theorem (\cite{pest-tri}), there exists a neighborhood of
the identity which is in definable bijection with an open subset of $R^n$ for some
real closed field $R$, or of $V^n$ for some ordered vector space $V$ (we use here
the definability of a group operation near the identity of $\CU$).

In the linear case, the group operation of $\CU$ is locally isomorphic near
$e_{\CU}$ to $+$ near $0\in M^n$ (see \cite[Proposition 4.1 and Corollary 4.4]{ElSt} for a similar argument).
Clearly then the map $x\mapsto kx$ is non-constant.

Assume then that we are in the field case. Namely, we assume that some definable
neighborhood $W$ of $e$ is definably homeomorphic to an open subset of $R^n$, with
$e$ identified with $0\in \R^n$, and that a real closed field whose universe is a
subset of $W$ is definable in $\CM$. The following argument was suggested by S.
Starchenko. If $M(x,y)=xy$ is the group product of elements near $e$, then it is
$R$-differentiable and its differential at $(e,e)$ is $x+y$. It follows that the
differential of the map $x\mapsto x^n$ is $nx$. Therefore, for every $n$, the map
$x\mapsto x^n$ is not the constant map.

As for the last clause, note first that $\CU[n]$ is a compatible \Vdef subgroup of
$\CU$ because its restriction to every definable set is obviously definable (by the
formula $nx=0$). Because $\CU[n]$ has exponent at most $n$, it follows from what we
have just proved that its dimension must be zero, so its intersection with every
definable set is finite.\qed

\begin{rmk} Although we did not write down the details, we believe that the above result is actually true
without any assumptions on the ambient o-minimal $\CM$. This can be seen by
expressing a neighborhood of $e_{\CU}$ as a direct product of  neighborhoods, in
cartesian powers of orthogonal real closed fields and  ordered vector spaces.
\end{rmk}

Assume that $\CU=\bigcup_{i\in I}X_i$ and that $\CU^{00}$ exists. Given  the
projection $\pi:\CU\to \CU/\CU^{00}$, we define the \emph{logic topology} on
$\CU/\CU^{00}$ by: $F\sub\CU/ \CU^{00}$ is closed if and only if for every $i\in I$,
$\pi^{-1}(F)\cap X_i$ is type-definable. We first prove a general lemma.
\begin{lemma} \label{preimage} Let $\CU$ be a locally definable group for which $\CU^{00}$ exists and let $\pi:\CU\to \CU/\CU^{00}$ be
the projection map.
 If $K_0\sub \CU/\CU^{00}$ is a compact set, then $\pi^{-1}(K_0)$ is contained in a
definable subset of $\CU$.
\end{lemma}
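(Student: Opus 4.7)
\textbf{Proof plan for Lemma \ref{preimage}.}
The strategy is to locate an index $i\in I$ with $K_0\subseteq \pi(X_i)$. Granting this, $\pi^{-1}(K_0)\subseteq \pi^{-1}(\pi(X_i))=X_i\cdot \CU^{00}$, and since by the standing convention $\CU^{00}$ is contained in a definable set $D\subseteq\CU$, we get $\pi^{-1}(K_0)\subseteq X_i\cdot D$, which is a definable subset of $\CU$ (contained in some $X_j$ by the local definability of multiplication).

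The first ingredient is that each $\pi(X_i)$ is closed in $\CU/\CU^{00}$ with the logic topology. Unwinding the definition, this amounts to showing that $(X_i\cdot\CU^{00})\cap X_j$ is type-definable for every $j\in I$. I would write this set as the image, under the second projection, of
\[
\{(x,y)\in X_i\times X_j : x^{-1}y\in \CU^{00}\},
\]
which is type-definable; the image of a type-definable subset of a definable set remains type-definable by a routine application of saturation (the complement of the projection is a small union of definable sets by compactness).

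The core of the proof is a contradiction argument. Assume WLOG that $I=\N$ and that $X_i\subseteq X_{i+1}$, and suppose $K_0\not\subseteq \pi(X_i)$ for every $i$. Pick $k_i\in K_0\setminus \pi(X_i)$; since every single point of $\CU/\CU^{00}$ lies in some $\pi(X_j)$, we may pass to an infinite subsequence (and re-index) so that the $k_i$'s are pairwise distinct while still $k_i\notin \pi(X_i)$. Set $S=\{k_i:i\in\N\}$. Because $k_i\notin \pi(X_i)\supseteq \pi(X_j)$ whenever $i\geq j$, we have $S\cap \pi(X_j)\subseteq\{k_1,\dots,k_{j-1}\}$. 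Consequently, for any subset $T\subseteq S$ and any $j$,
\[
\pi^{-1}(T)\cap X_j \;=\; \bigcup_{i<j,\ k_i\in T}\bigl(g_i\CU^{00}\cap X_j\bigr),
\]
where $g_i$ is any lift of $k_i$, is a finite union of type-definable sets, hence type-definable. So every subset of $S$ is closed in $\CU/\CU^{00}$, which means $S$ inherits the discrete subspace topology. But $S\subseteq K_0$ is then a closed—hence compact—discrete subset of $K_0$, so $S$ is finite, contradicting its infinitude.

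The main technical obstacle is the first ingredient: verifying that the projection of a type-definable set from a definable ambient remains type-definable. This is a standard compactness/saturation argument but must be stated carefully, since without the definability of the ambient (or some compactness-like hypothesis) it would fail. Once that is in hand, the rest is a clean pairing of the logic-topology definition of closedness with the observation that a compact discrete space is finite.
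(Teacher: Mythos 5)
Your proof is correct, but it takes a genuinely different route from the paper's. The paper argues via a sequence escaping to infinity: writing $\CU=\bigcup_n X_n$ increasing, if the conclusion fails one finds $x_n\in X_{k_n}\setminus X_{k_n-1}$ with $k_n\to\infty$ and $\pi(x_n)\in K_0$; compactness of $K_0$ gives a convergent subsequence $\pi(x_n)\to a$, the fibre $\pi^{-1}(a)$ is a coset of $\CU^{00}$ and hence lies in a definable $Z$, and logical compactness produces an open $V\ni a$ with $\pi^{-1}(V)\sub Z$, so a tail of the $x_n$ lies in $Z$ --- a contradiction. You instead prove the (slightly stronger) statement that some $\pi(X_i)$ already contains $K_0$, by showing that otherwise one can manufacture an infinite set $S\sub K_0$ every subset of which is closed in the logic topology, i.e.\ an infinite compact discrete space. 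Your key computation is sound: since $S\cap\pi(X_j)\sub\{k_1,\dots,k_{j-1}\}$, the set $\pi^{-1}(T)\cap X_j$ is a finite union of translates of $\CU^{00}$ intersected with $X_j$, hence type-definable; and the finish $\pi^{-1}(K_0)\sub X_i\cdot\CU^{00}\sub X_i\cdot D$, with $D$ a definable set containing $\CU^{00}$, is exactly right. What your route buys is that it uses only the definition of the logic topology plus the purely topological fact that a compact space has no infinite closed discrete subspace, avoiding any appeal to convergent subsequences and to the ``shrink to a neighbourhood of $a$'' step; what it costs is the levelwise bookkeeping above. One remark: your ``first ingredient'' (that each $\pi(X_i)$ is closed) is true and provable by the saturation argument you sketch, but it is never actually invoked in the contradiction argument, so it can be dropped without loss.
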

\begin{proof}  We write $\CU=\bigcup_{n\in \N}X_n$, and we assume that the union is increasing.
If the result fails then there is a sequence $k_n\to \infty$ and $x_n\in
X_{k_n}\setminus X_{k_n-1}$ such that $\pi(x_{n})\in K_0$. Since $K_0$ is compact we
may assume that the sequence $\pi(x_{n})$ converges to some $a\in K_0$. The set
$\pi^{-1}(a)$ is a coset of $\CU^{00}$ and therefore contained in some definable set
$Z\sub \CU$. Since $a$ can be realized as the intersection of countably many open
sets, there is, by logical compactness, some open neighborhood $V\ni a$ in $\CU/\CU^{00}$ such
that $\pi^{-1}(V)\sub Z$. But then, the whole tail of the sequence $\{\pi(x_n)\}$
belongs to $V$ and therefore the tail of $\{x_n\}$ is contained in $Z$,
contradicting our assumption on the sequence.\end{proof}

\begin{claim}\label{reduce-compact}
Let $\CU$ be an abelian locally definable group. Then there exists a definable
torsion-free subgroup $H\sub \CU$ such that every definable subset of $\CU/H$ is relatively
definably compact. If, in addition, $\CU$ is definably generated, then
 $\CU/H$ can be generated by a definably compact set.
\end{claim}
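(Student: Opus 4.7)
The plan is to take $H$ to be a definable torsion-free subgroup of $\CU$ of maximum possible dimension. Such an $H$ exists: the trivial subgroup is a valid candidate and $\dim\CU$ is finite. After replacing $H$ by its definably connected component, we may assume $H$ is definably connected. Since $H$ is definable it is automatically a compatible subgroup of $\CU$, so by Fact~\ref{edmundo} the quotient $\CU/H$ is a locally definable group with a locally definable projection $\pi:\CU\to\CU/H$. Every definable subset of $\CU/H$ lies inside $\pi(X)$ for some definable $X\sub\CU$ (by saturation, since $\CU/H$ is a bounded directed union of such images), so it suffices to prove that $\overline{\pi(X)}$ is definably compact in $\CU/H$ for every definable $X\sub\CU$.

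I would establish this by contradiction. Suppose some definable $X\sub\CU$ has $\pi(X)$ not relatively definably compact. Using the Baro-Otero manifold-like topology on $\CU$ (which descends to a manifold-like topology on $\CU/H$), definable curve selection yields a definable curve $\sigma:(0,1)\to\pi(X)$ whose trajectory escapes every definably compact subset of $\CU/H$ as $t\to 0^+$. By o-minimal definable choice applied to $\pi$, the curve $\sigma$ lifts to a definable curve $\tilde\sigma:(0,1)\to X$. The strategy is to use $\tilde\sigma$ to produce a definable $1$-dimensional torsion-free subgroup $H'\leq\CU$ with $H'\not\sub H$; then $H+H'$ would be a definable torsion-free subgroup of $\CU$ of strictly greater dimension, contradicting the maximality of $\dim H$.

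The main obstacle is precisely the construction of $H'$ from $\tilde\sigma$. The intuition is that an escape direction modulo $H$ should correspond to a transverse $1$-parameter subgroup of $\CU$. To make this rigorous I would exploit the local group structure of $\CU$ at the identity: by the Trichotomy Theorem (as in the proof of Proposition~\ref{exponent}), a neighborhood of $e_\CU$ is definably isomorphic to an open subset of a vector group or a real closed field, with the group operation locally given by $+$. In this local chart $H$ appears as an affine subspace, the escape direction of $\tilde\sigma$ modulo $H$ corresponds to a vector transverse to it, and the resulting straight $1$-parameter subgroup is definable and transverse to $H$; its torsion-freeness then follows from Proposition~\ref{exponent} applied to this $1$-parameter subgroup. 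The delicate point is ensuring that what is produced locally extends to a genuinely definable (not merely \Vdef) subgroup of $\CU$; for this one uses that the whole trajectory of $\tilde\sigma$ lies inside the single definable set $X$, so finitely many group operations starting from its image build $H'$ within a fixed definable ambient.

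For the second assertion, suppose $\CU$ is generated by a definable set $Y$. Then $\pi(Y)$ is a definable subset of $\CU/H$, and by the main claim $K:=\overline{\pi(Y)}$ is definably compact. Since $\pi(Y)$ generates $\CU/H$ and $\pi(Y)\sub K$, the definably compact set $K$ also generates $\CU/H$.
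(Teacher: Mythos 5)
There is a genuine gap, and it sits exactly where you yourself flag ``the main obstacle.'' The overall skeleton of your argument (take $H$ of maximal dimension among definable torsion-free subgroups, and derive a contradiction from an escaping definable curve in $\CU/H$ by producing a transverse $1$-dimensional torsion-free definable subgroup) is essentially a reorganization of what the paper does; the paper runs the same idea as an induction on $\dim\CU$ rather than a maximality argument. But the crucial step --- from ``$\CU/H$ has a definable curve with no limit point'' to ``there is a $1$-dimensional torsion-free \emph{definable} subgroup'' --- is not something you can dispatch with the Trichotomy Theorem and a chart at the identity. That implication is precisely \cite[Theorem 5.2]{ed1}, the locally definable analogue of the Peterzil--Steinhorn theorem, and it is a substantial result: the curve escapes to infinity, far from $e_\CU$, so the local group structure at the identity says nothing directly about it. The actual construction takes limits of difference elements $\tilde\sigma(t)-\tilde\sigma(t')$ as the parameters tend to the bad endpoint and must verify that the resulting set is a subgroup, that it is definable (not just \Vdef), that it is $1$-dimensional, and that it is torsion-free; none of this falls out of ``$H$ appears as an affine subspace in the chart and the escape direction gives a transverse line.'' The paper does not reprove this; it cites it. As written, your proposal amounts to restating the hard lemma and sketching an argument for it that does not work.

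Two smaller points. First, even granting $H'$, the step ``$H+H'$ is a definable torsion-free subgroup of larger dimension'' needs $H\cap H'=\{0\}$ (a sum of torsion-free subgroups of an abelian group need not be torsion-free in general); you can get this because $H\cap H'$ is a finite, hence trivial, subgroup of the connected $1$-dimensional torsion-free $H'$, but it should be said. The cleaner route --- and the one the paper takes --- is to find the $1$-dimensional torsion-free subgroup $H_1'$ inside $\CU/H$ itself and pass to $\pi^{-1}(H_1')$, which is automatically definable and torsion-free as an extension of a torsion-free group by the torsion-free $H$, and has dimension $\dim H+1$. Second, your last paragraph (the definably generated case) is fine and matches the paper: replace the generating set by its closure, which is definably compact by the first part.
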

\begin{proof}
As can easily be verified, for a definably generated \Vdef group $\CV$, the following are equivalent:
 (a) every definable subset of $\CV$ is relatively definably compact, (b) every definable path in $\CV$ has limit points in $\CV$.
  A \Vdef group with property (b) was called in \cite{ed1} ``definably compact". In Theorem 5.2 of the same reference,
   it was shown that if $\CV$ is a \Vdef group which is not definably compact, then $\CV$ contains a $1$-dimensional
   torsion-free definable subgroup $H_1$. Now, if $\CU$ is abelian, then by Fact \ref{edmundo}, $\CU/H_1$ is definably isomorphic to
   a locally definable definable group.
   Using induction on $\dim(\CU)$, we see that $\CU$ contains a definable torsion-free
subgroup
   $H$ such that $\CU/H$ is definably compact in the above sense.

If in addition, $\CU$ is definably generated then $\CU/H$ is also definably
generated by some set $X$. By replacing $X$ with $Cl(X)$ we conclude that
   $\CU/H$ is generated by a definably compact set.
\end{proof}

\begin{prop} \label{divisible} Let $\CU$ be a connected abelian $\bigvee$-definable
 group, which is definably generated. If $\CU^{00}$ exists, then
\begin{enumerate}

\item The group $\CU/\CU^{00}$, equipped with the logic topology, is isomorphic to
$\R^k\times K$, for some compact group $K$. (Later we will see that $K\simeq \mathbb
T^r$ where $\mathbb T$ is the circle group and $r\in \bb N$).

\item $\CU$ and $\CU^{00}$ are divisible.

\item $\CU^{00}$ is
torsion-free.
\end{enumerate}
\end{prop}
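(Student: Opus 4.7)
For \emph{(1)}, I equip $\CU/\CU^{00}$ with the logic topology and check it is a Hausdorff abelian topological group; the Hausdorff property is immediate from the fact that cosets of $\CU^{00}$ are type-definable. Topological connectedness: any nontrivial clopen subgroup of $\CU/\CU^{00}$ would pull back to a proper \Vdef compatible clopen subset of $\CU$, contradicting the connectedness hypothesis on $\CU$. For local compactness, pick a definable $X \supseteq \CU^{00}$; by Lemma \ref{preimage} combined with definable generation of $\CU$, one arranges $\pi(X)$ to contain a compact neighborhood of the identity. Having shown that $\CU/\CU^{00}$ is a connected, locally compact, Hausdorff, abelian topological group, the classical Pontryagin--Iwasawa structure theorem yields $\CU/\CU^{00} \simeq \R^k \oplus K$ for some compact connected abelian group $K$.

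For \emph{(2)}, divisibility of $\CU/\CU^{00}$ is immediate from (1), since $\R^k$ and every compact connected abelian group are divisible. For divisibility of $\CU$, the proof of Proposition \ref{exponent} already shows that the differential of $\mu_n\colon x \mapsto nx$ at the identity is $n \cdot \mathrm{id}$, which is nonsingular; hence $\mu_n$ is locally a diffeomorphism at $e_\CU$ and its image $n\CU$ contains an open neighborhood of $e_\CU$. Thus $n\CU$ is an open (and therefore closed) subgroup; it is \Vdef with \Vdef complement, so a compactness argument gives compatibility, and then connectedness of $\CU$ forces $n\CU = \CU$. For divisibility of $\CU^{00}$, using that $\CU^{00}$ lies in a definable $X$ on which $\mu_n$ is definable, a standard saturation argument (write $\CU^{00}$ as a filtered intersection of definable sets $Z_i \sub X$) shows $n\CU^{00} = \bigcap_i \mu_n(Z_i)$ is type-definable. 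The snake lemma, applied to $0 \to \CU^{00} \to \CU \to \CU/\CU^{00} \to 0$ and multiplication by $n$, gives, using divisibility of $\CU$ and of $\CU/\CU^{00}$, an exact sequence
\begin{equation*}
0 \to \CU^{00}[n] \to \CU[n] \to (\CU/\CU^{00})[n] \to \CU^{00}/n\CU^{00} \to 0.
\end{equation*}
Since $(\CU/\CU^{00})[n]$ has bounded cardinality, so does $\CU^{00}/n\CU^{00}$; hence $n\CU^{00}$ is a type-definable subgroup of bounded index in $\CU$, and minimality of $\CU^{00}$ forces $n\CU^{00} = \CU^{00}$.

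For \emph{(3)}, the main obstacle, the snake sequence just displayed simplifies (using $\CU^{00}/n\CU^{00} = 0$) to
\begin{equation*}
0 \to \CU^{00}[n] \to \CU[n] \to (\CU/\CU^{00})[n] \to 0.
\end{equation*}
By Proposition \ref{exponent}, $\CU[n]$ meets any definable subset of $\CU$ in a finite set, so $\CU^{00}[n]$ (contained in $X \cap \CU[n]$) is finite. To force $\CU^{00}[n] = 0$, I expect the argument to proceed either by (a) reducing to the definably compact generated case via Claim \ref{reduce-compact}: pass to $\CU/H$ for a torsion-free definable $H \sub \CU$, use that $H$ is torsion-free to inject $\CU[n]$ into $(\CU/H)[n]$, and transport torsion-freeness of $(\CU/H)^{00}$ back to $\CU^{00}$; or (b) refining (1) to show $K$ is a Lie group, so $K \simeq \mathbb{T}^r$ and $|(\CU/\CU^{00})[n]| = n^r$, and then matching a dimensional count of $|\CU[n]|$ against this to force injectivity of $\CU[n] \to (\CU/\CU^{00})[n]$. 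Producing this size-matching is the main difficulty.
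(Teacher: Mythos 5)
Your part (1) is essentially right and close to the paper's route (the paper deduces $\R^k\times K$ from compact generation of $\CU/\CU^{00}$ and \cite[Theorem 7.57]{HM} rather than from connectedness plus the LCA structure theorem, but either works once the logic topology is related to the group topology via \cite{BOPP}), and your snake-lemma derivation of the divisibility of $\CU^{00}$ from that of $\CU$ and $\CU/\CU^{00}$ is a correct repackaging of the paper's argument. The first genuine gap is in your proof that $\CU$ itself is divisible. The step ``$n\CU$ is an open, hence closed, subgroup; it is \Vdef with \Vdef complement, so a compactness argument gives compatibility'' fails: the complement of an open \Vdef subgroup is a union of cosets, and unless you already know the index is bounded there is no reason for that union to be a \emph{bounded} union of definable sets; and without compatibility, the paper's notion of connectedness tells you nothing about clopen subgroups. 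Concretely, in a saturated ordered divisible abelian group $\la G,<,+\ra$ take $0<a_1<a_2<\cdots$ with $na_i<a_{i+1}$ and $W=\bigcup_i(-a_i,a_i)$; then $G\times W$ is a proper open \Vdef subgroup of the connected definable group $\la G^2,+\ra$ whose complement is not \Vdef and which is not compatible. So your chain of implications breaks exactly where the work lies. The paper supplies compatibility of $n\CU$ directly: writing $\CU=\bigcup_j X(j)$, it shows $Y\cap n\CU\sub nX(j)$ for some $j$ by combining Lemma \ref{preimage} with the fact that multiplication by $n$ is a \emph{proper} map on $\R^k\times K$ (a sequence escaping every definable subset of $\CU$ has image escaping every compact subset of $\CU/\CU^{00}$, yet its $n$-multiple stays inside the compact set $\pi(Y)$). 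Nothing in your proposal replaces this, and this is the heart of part (2). (A smaller point: the differential computation is only available in the field case of the trichotomy; in the linear case local surjectivity of $x\mapsto nx$ comes from divisibility of the ordered group.)

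For (3) you do not give a proof: both routes (a) and (b) are stated as expectations and you concede the decisive step is missing. Route (a) is moreover in danger of circularity, since identifying $\CU^{00}$ with the preimage of $(\CU/H)^{00}$ is exactly the content of Proposition \ref{criterion-tor}, which the paper proves \emph{using} Proposition \ref{divisible}; and route (b) would require knowing $K$ is a torus and counting $\CU[n]$, neither of which is available at this stage. The paper's actual argument is different and self-contained: fix a definable generic $X$; for each $n$, use divisibility and definable choice to produce a definable $Y$ on which $x\mapsto nx$ is a bijection onto $X$; then $Y$ is generic, $(g+Y)\cap Y=\emptyset$ for every nonzero $g\in\CU[n]$, hence $Stab_{ng}(Y)\cap\CU[n]=\{0\}$, and since $\CU^{00}\sub Stab_{ng}(Y)$ this kills the $n$-torsion of $\CU^{00}$. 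You would need to import this (or an equivalent) idea to complete part (3).
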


\begin{proof}

(1)  Let us denote the group $\CU/\CU^{00}$ by $L$.
 By \cite[Lemma 2.6]{BOPP} (applied
to $\CU$ instead of $G$ there), the image of every definable,
definably connected subset of $\CU$ under $\pi$ is a connected
subset of $L$. As in the  proof of Theorem 2.9 in \cite{BOPP}, the
group $L$ is locally connected, and since $\CU$ is connected, the
group $L$ must actually be connected.

Since $\CU$ is generated by a definable set, say $X\sub \CU$, its image $\pi(\CU)=L$
is generated by $\pi(X)$ which is a compact set ($\pi(X)$ is a quotient of $X$ by a
type-definable equivalence relation with bounded quotient, see \cite{Pillay}).
Hence, the group $L$ is so-called compactly generated. By \cite[Theorem 7.57]{HM},
the group $L$ is then isomorphic, as a topological group, to a direct product
$\R^k\times K$, for some
 compact abelian group $K$. This proves (2).

In what follows, we use $+$ for the group operation of
$\CU$ and write $\CU$ as an increasing countable union  $\bigcup_{k=1}^{\infty}
X(k)$ (with $X(k)$ as in the notation from Section \ref{notation}).


(2) Let us see that $\CU$ is divisible. Given $n\in \N$, consider the map $z\mapsto
nz:\CU\to \CU$. For a subset $Z$ of $\CU$, let $nZ$ denote the image of $Z$ under
this map.  The kernel of this map is $\CU[n]$. By Proposition \ref{exponent},
$\CU[n]$ must have dimension $0$, and therefore by connectedness
$\dim(n\CU)=\dim(\CU)$.

 Since $\CU$ is connected, by \cite[Proposition 1]{BE} it is  sufficient   to show that for every $n$, the group
$n\CU$ is a compatible subgroup of $\CU$, namely that for every definable $Y\sub
\CU$, the set $Y\cap n\CU$ is definable.

We claim  that $Y\cap n\CU$ is contained in $nX(j)$ for some $j$. Assume towards a contradiction
 that this fails.  Then for every $j$ there exists
$x_j\in \CU$ such that $nx_j\in Y\setminus nX(j)$. Hence, $x_j\notin X(j)$ and
therefore there is a sequence $k_j\to \infty$ such that $x_j\in X(k_j)\setminus
X(k_j-1)$ and $nx_j\in Y$. Consider the projection $\pi(Y)$ and $\pi(x_j)$ in $L$.
Because $Y$ is definable the set $\pi(Y)$ is compact.

By Lemma \ref{preimage}, because the sequence $\{x_j\}$ is not contained in any
definable subset of $\CU$, its image $\{\pi(x_j)\}$ is not contained in any compact
subset of $L$. At the same time, $n\pi(x_j)$ is contained in the compact set
$\pi(Y)$. However, since $L$ is isomorphic to $\R^k\times K$, for a compact group
$K$, the map $x\mapsto nx$ is a proper map on $L$ and hence this is impossible. We
therefore showed that $$Y\cap n\CU\sub nX(j)\sub n\CU,$$ and so $Y\cap n\CU=Y\cap
nX(j)$ which is a definable set. We can conclude that the group $n\CU$ is a
compatible subgroup of $\CU$, of the same dimension and therefore $n\CU=\CU$. It
follows that $\CU$ is divisible.

Let us see that $\CU^{00}$ is also divisible. Indeed, consider the map $x\mapsto nx$
from $\CU$ onto $\CU$. It sends $\CU^{00}$ onto the group $n\CU^{00}$ and therefore
$[\CU:\CU^{00}]\leq [\CU:n\CU^{00}]$. Since $\CU^{00}$ is the smallest
type-definable subgroup of bounded index we must have $n\CU^{00}=\CU^{00}$, so
$\CU^{00}$ is divisible.

(3) This is a repetition of an argument from  \cite{Pet-Pi}. Because $\CU^{00}$
exists there is a definable generic set $X\sub \CU$ which we now fix. By Theorem
\ref{NIP-vdefinable}, the group $Stab_{ng}(X)$ contains $\CU^{00}$,  so it is
sufficient to prove that for every $n$, there is a definable $Y\sub \CU$ such that
$Stab_{ng}(Y)\cap \CU[n]=\{0\}.$ We do that as follows. Because $\CU$ is divisible,
the \Vdef map $h\mapsto nh$ is surjective. By compactness, there exists a definable
$Y_1\sub \CU$ which maps onto $X$. However, since $\CU[n]$ is compatible and has
dimension zero, every element of $X$ has only finitely many pre-images in $Y_1$. By
definable choice, we can find a definable $Y\sub Y_1$ such that the map $h\mapsto
nh$ induces a bijection from $Y$ onto $X$. The set $Y$ is generic in $\CU$ as well
(since its image is generic and the kernel of the map has dimension zero) and for
every $g\in \CU[n]$ we have $(g+Y)\cap Y=\emptyset$. Hence, the only element of
$\CU[n]$ which belongs to $Stab_{ng}(Y)$ is $0$. It follows that $\CU^{00}$ is
torsion-free.
\end{proof}

As a corollary, we can formulate the following criterion for
recognizing $\CU^{00}$, generalizing results from \cite{BOPP} and
\cite{HPP}:
\begin{prop}\label{criterion-tor} Let $\CU$ be a connected abelian \Vdef group which is definably generated.
Assume that $H\leqslant \CU$ is type-definable of bounded
index. Then $H=\CU^{00}$ if and only if $H$ is torsion-free.

In particular, if $\CU$ is torsion-free then $\CU^{00}$, if it exists, is the only type-definable subgroup of bounded index.
\end{prop}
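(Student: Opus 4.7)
My plan is as follows. The forward direction is immediate from Proposition \ref{divisible}(3): since $\CU^{00}$ is torsion-free, so is $H=\CU^{00}$. For the converse, suppose $H$ is torsion-free, type-definable, of bounded index. The existence of such an $H$ forces $\CU^{00}$ itself to exist by \cite[Proposition 7.4]{HPP}, and minimality gives $\CU^{00}\subseteq H$. To prove the reverse inclusion, I descend to $L:=\CU/\CU^{00}$ and aim to show $\bar H:=\pi(H)=H/\CU^{00}$ is trivial.

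The first step is to verify that $\bar H$ is torsion-free in $L$. Let $x=\pi(\tilde x)\in\bar H$ with $\tilde x\in H$ and $nx=0$, so that $n\tilde x\in\CU^{00}$. By divisibility of $\CU^{00}$ (Proposition \ref{divisible}(2)), there exists $y\in\CU^{00}$ with $ny=n\tilde x$; then $\tilde x-y\in\CU[n]\cap H$ (using $\CU^{00}\subseteq H$), and torsion-freeness of $H$ gives $\tilde x=y\in\CU^{00}$, hence $x=0$. In particular this already rules out torsion in $\bar H$ coming from cosets of $\CU^{00}$ meeting $\CU[n]$.

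The second step, which I regard as the main obstacle, is to conclude $\bar H=0$. By Proposition \ref{divisible}(1), $L\cong\R^k\times K$ as a topological group, and since $H=\pi^{-1}(\bar H)$ is type-definable, $\bar H$ is closed in the logic topology on $L$. The task is then to rule out nontrivial closed torsion-free subgroups of $L$. When $\CU$ is generated by a definably compact set, Claim \ref{reduce-compact} permits a reduction to the compact case ($k=0$), and the argument then proceeds along classical lines: any nontrivial closed subgroup of the relevant $K$---once $K$ is established to be a torus for our specific $\CU$---contains torsion, contradicting the first step. Handling the $\R^k$-factor in the general case is the hardest point, since a closed torsion-free subgroup of $\R^k\times K$ can a priori live inside the $\R^k$-component; eliminating this presumably requires using the bounded-index condition on $H$ together with the divisibility of $\CU$ (Proposition \ref{divisible}(2)) to force any such candidate $\bar H$ to lift to a type-definable subgroup of $\CU$ that must collapse back into $\CU^{00}$. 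Finally, the ``in particular'' clause is immediate: if $\CU$ is torsion-free, every type-definable subgroup of bounded index is torsion-free, and the main equivalence then yields uniqueness of $\CU^{00}$.
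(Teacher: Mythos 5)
There is a genuine gap in the converse direction, precisely at the point you flag as the main obstacle. Your forward direction and your first step coincide with the paper's proof (the paper runs the same computation contrapositively: torsion in $\pi(H)$ lifts to torsion in $H$ because $\ker\pi=\CU^{00}$ is divisible and $\CU^{00}\leqslant H$). But the task you then set yourself --- ``rule out nontrivial \emph{closed} torsion-free subgroups of $L$'' --- is literally impossible when $k\geq 1$, since $\R\times\{0\}$ is such a subgroup of $\R^k\times K$; so closedness of $\bar H$ cannot be the right input, and your proposed escape routes (invoking the bounded-index condition to lift $\bar H$ back to $\CU$, or reducing via Claim \ref{reduce-compact}, which quotients $\CU$ by a \emph{different} definable subgroup and would force you to re-track $H$ and $\CU^{00}$) are left as speculation.

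The missing idea is that $\pi(H)$ is not merely closed but \emph{compact}: $H$ is type-definable, hence contained in a single definable set $X\sub\CU$, and $\pi(X)$ is compact in the logic topology, so the closed subset $\pi(H)\sub\pi(X)$ is compact. Since the only compact subgroup of $\R^k$ is trivial, $\pi(H)$ projects trivially to the $\R^k$-factor and lies in $\{0\}\times K$; the difficulty you identify with the $\R^k$-component simply never arises. From there the paper concludes in one line: a non-trivial compact subgroup of $K$ has torsion (this does use that $K$ is a torus, as recorded parenthetically in Proposition \ref{divisible} and established in Proposition \ref{o-minimalcase} --- your caveat on this point is fair), contradicting the torsion-freeness of $\bar H$ from your first step. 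So the skeleton of your argument is the paper's, but the single observation that actually closes it is absent.
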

\begin{proof} Since $H$ is type-definable of bounded index, by \cite[Proposition 7.4]{HPP} $\CU^{00}$ exists.

If $H=\CU^{00}$, then by Proposition \ref{divisible} it is torsion-free.

 For the converse, assume that
$H\leqslant \CU$ is torsion-free.  We let $L=\CU/\CU^{00}$, equipped with the logic
topology. Because $\CU^{00}\leqslant H$, the map $\pi:\CU\to L$ sends the type-definable group $H$ onto a compact subgroup of $L$. If $\pi(H)$ is non-trivial
(namely, $H\neq \CU^{00}$) then $\pi(H)$ has torsion. However, $\ker(\pi)=\CU^{00}$
is divisible (see Proposition \ref{divisible}) and therefore $H$ has torsion.
Contradiction.\end{proof}

\begin{lemma}\label{equivalences} Let $\CU$ be a connected abelian $\bigvee$-definable
 group, which is definably generated.
 Then the following are equivalent.\begin{enumerate}
  \item $\CU$ contains a definable generic
 set.
\item $\CU^{00}$ exists.

 \item $\CU^{00}$ exists and $\CU/\CU^{00}\simeq \R^k\times K$,
for some $k\in \mathbb N$ and a compact group $K$.

 \item There exists a definable group $G$
 and a \Vdef surjective homomorphism $\phi:\CU\to G$ with $\ker(\phi)\simeq
 \bb{Z}^{k'}$, for some $k'\in \mathbb N$.

 \item There exists a definable group $G$
 and a \Vdef surjective homomorphism $\phi:\CU\to G$.

 \end{enumerate}
Assume now that the above hold. If $k$ is as in (3) and $\phi:\CU\to G$ and $k'$ are
as in (4), then $k=k'$.
\end{lemma}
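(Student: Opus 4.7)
The plan is to prove the five conditions equivalent cyclically, leaning on Proposition \ref{divisible} and on a reduction to the definably compact case via Claim \ref{reduce-compact}. The trivial directions $(3) \Rightarrow (2)$ and $(4) \Rightarrow (5)$, together with $(2) \Rightarrow (3)$ (which is Proposition \ref{divisible}(1)), leave $(2) \Rightarrow (1)$, $(1) \Rightarrow (2)$, $(2) \Rightarrow (4)$, $(5) \Rightarrow (1)$, and the final claim $k=k'$. For $(2) \Rightarrow (1)$, observe that $\CU^{00}$ is by definition contained in some definable $X \subseteq \CU$ and, having bounded index, its translates (and hence those of $X$) cover $\CU$ in bounded cardinality, so $X$ is generic.

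For $(1) \Rightarrow (2)$, I would apply Claim \ref{reduce-compact} to obtain a definable torsion-free $H \leq \CU$ with $\CU/H$ generated by a definably compact set. Let $\pi : \CU \to \CU/H$ be the locally definable quotient of Fact \ref{edmundo}. The image $\pi(X) \subseteq \CU/H$ of a definable generic $X \subseteq \CU$ is again a definable generic set, so by Corollary \ref{defcomp} the non-generic definable subsets of $\CU/H$ form an ideal, and Theorem \ref{NIP-vdefinable} gives $(\CU/H)^{00}$. Pulling back through $\pi$ (using that $H$ is definable, so preimages of type-definable sets in $\CU/H$ are type-definable and contained in definable sets of $\CU$) produces a type-definable subgroup of $\CU$ of bounded index, whence $\CU^{00}$ exists by \cite[Proposition 7.4]{HPP}.

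For $(5) \Rightarrow (1)$, write $\CU = \bigcup_n X_n$ so that $G = \bigcup_n \phi(X_n)$ is an increasing countable union of definable subsets of the definable group $G$; saturation forces $\phi(X_{n_0}) = G$ for some $n_0$. Strong definable choice (\cite[Corollary 8.1]{ed1}) then yields a definable section $s : G \to X_{n_0}$ of $\phi$, and $s(G)$ is a complete set of coset representatives of $\ker\phi$. Combined with the fact that the $(2) \Rightarrow (4)$ construction below forces $\ker\phi$ to be finitely generated (and hence small), this makes $s(G)$ a definable generic set.

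The heart of the matter is $(2) \Rightarrow (4)$. From Proposition \ref{divisible} one has $\CU/\CU^{00} \simeq \mathbb{R}^k \times K$ with $K$ compact abelian. I would again reduce via Claim \ref{reduce-compact}: for the definably-compactly-generated quotient $\CU/H$ a Pillay's-conjecture-type result supplies a \Vdef surjection onto a definably compact definable group with discrete, finitely generated kernel; assembled with the definable torsion-free subgroup $H$, this produces $\phi : \CU \to G$ with $\ker\phi \simeq \mathbb{Z}^{k'}$, the torsion-freeness of the kernel relying on the divisibility of $\CU^{00}$ from Proposition \ref{divisible}. Finally $k=k'$ follows by passing $\phi$ through $\CU^{00}$ and $G^{00}$: the induced continuous surjection $\mathbb{R}^k \times K \to G/G^{00}$ together with the discreteness of $\ker\phi$ identifies, via a covering / fundamental-group count, the rank $k$ of the non-compact factor with the rank $k'$ of the kernel. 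The main obstacle I expect is the explicit construction of $G$ in $(2) \Rightarrow (4)$: one must produce the definable target coherently from the compact factor $K$ and the torsion-free subgroup $H$, and confirm that the resulting kernel is finitely generated free abelian of the predicted rank.
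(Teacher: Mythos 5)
The overall architecture is right, and your $(1)\Rightarrow(2)$ is exactly the paper's argument (reduce via Claim \ref{reduce-compact}, push the generic set forward to $\CU/H$, apply Corollary \ref{defcomp} and Theorem \ref{NIP-vdefinable}, pull back). But the implication you yourself flag as ``the main obstacle'', namely $(2)/(3)\Rightarrow(4)$, is genuinely missing, and the route you sketch for it does not work. If you quotient by the torsion-free definable subgroup $H$ and then map $\CU/H$ onto a definable group $G_1$, the composite $\CU\to\CU/H\to G_1$ has kernel containing $H$, a positive-dimensional definable group, not $\Z^{k'}$; there is no evident way to ``assemble'' $H$ back in so as to obtain a map out of $\CU$ with discrete finitely generated kernel. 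Moreover, the statement you want to quote for $\CU/H$ (a \Vdef surjection onto a definably compact definable group with finitely generated discrete kernel) is essentially an instance of the very implication being proved. The paper instead works directly with $L=\CU/\CU^{00}\simeq\R^k\times K$: it lifts the standard generators of $\Z^k\sub\R^k$ to elements $u_1,\dots,u_k\in\CU$ generating a subgroup $\Gamma$ with $\Gamma\cap\CU^{00}=\{0\}$, uses Lemma \ref{preimage} together with \cite[Lemma 1.7]{BOPP} to produce a definable $Y$ with $Y+\Gamma=\CU$ and $Y\cap\Gamma$ finite (finiteness because $\pi_{\CU}(Y)$ is compact while $\pi_{\CU}$ is injective on $\Gamma$), and then invokes Lemma \ref{conditions}. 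That concrete construction is the whole content of the step and has to be supplied.

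Two smaller points. Your $(5)\Rightarrow(1)$ is circular as written: you justify smallness of $\ker\phi$ by appeal to the $(2)\Rightarrow(4)$ construction, but the $\phi$ given in (5) is arbitrary and need not be the one constructed there. The repair is a compactness argument rather than a definable section: choose definable $X$ with $\phi(X)=G$, so $\CU=X+\ker\phi$; writing $\CU=\bigcup_n X_n$, saturation gives finite sets $F_n\sub\ker\phi$ with $X_n\sub X+F_n$, so countably many translates of $X$ cover $\CU$ and $X$ is generic. Finally, the claim $k=k'$ needs more than a ``covering/fundamental-group count'': the paper proves the two inequalities separately, showing $k\le k'$ because the projection $Y+\Gamma'=\R^k$ with $Y$ compact forces the real span of the $k'$ generators to be all of $\R^k$, and $k'\le k$ because $\ker\phi\cap\CU^{00}=\{0\}$ makes $\pi_{\CU}(\ker\phi)$ a discrete rank-$k'$ subgroup of $\R^k\times K$ which, $K$ being compact, embeds discretely into $\R^k$.
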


\begin{proof}
(1) $\Rightarrow$ (2). Note first that by Claim \ref{reduce-compact}, the group
$\CU$ has a definable torsion-free subgroup $H$ with $\CU/H$ definably generated by
a definably compact set. Because $\CU$ contains a definable generic set so does
$\CU/H$. By Corollary \ref{Dolich}, the definable non-generic sets in $\CU/H$ form
an ideal, so by Theorem \ref{NIP-vdefinable}, $(\CU/H)^{00}$ exists. Its pre-image
in $\CU$ is a type definable subgroup of bounded index which is also torsion-free
(since $H$  and $(\CU/H)^{00}$ are both torsion-free). By Proposition
\ref{criterion-tor} this pre-image equals $\CU^{00}$.

(2) $\Rightarrow $ (3). By Proposition \ref{divisible}.

(3) $\Rightarrow$ (4). Let $L=\R^k\times K$ and $\pi_{\CU}:\CU\to L$ be the
 projection map (whose kernel is $\CU^{00}$).

 We now fix generators
 $z_1,\ldots, z_k\in \R^k$ for $\Z^k$, and find $u_1,\ldots, u_k\in \CU$ with
 $\pi_{\CU}(u_i)=(z_i,0)$. If we let $\Gamma\leqslant \CU$ be the subgroup generated by
 $u_1,\ldots, u_k$ then $\pi_{\CU}(\Gamma)=\Z^k$. Note that since $z_1,\ldots, z_k$
 are $\Z$-independent, the restriction of $\pi_{\CU}$ to $\Gamma$ is injective,
 namely $\Gamma\cap \CU^{00}=\{0\}$.

By Lemma \ref{preimage}, there is a definable $X\sub \CU$ such that
$\pi_{\CU}^{-1}(K)\sub X$. It follows from \cite[Lemma 1.7]{BOPP} that the set
$\pi_{\CU}(X)$ contains not only $K$ but also an open neighborhood of $K$. But then,
there is an $m$ such that $m\pi_{\CU}(X)+\Z^k=L$. This implies that
$\pi_{\CU}(mX+\Gamma)=L$ and hence $mX+\CU^{00}+\Gamma\sub mX+X+\Gamma=\CU$. We let
$Y=mX+X$ and then $Y+\Gamma=\CU$.

We claim that $Y\cap \Gamma$ is finite. Indeed, if $Y\cap \Gamma$ were infinite
then, since $\pi_{\CU}$ is injective on $\Gamma$, the set $\pi_{\CU}(Y)\cap \Z^k$ is
infinite, contradicting the compactness of  $\pi_{\CU}(Y)$. We can now apply Lemma
\ref{conditions} and conclude that there is a definable group $G$ and a \Vdef
surjective homomorphism $\phi:\CU\to G$ whose kernel is $\Gamma$.

(4) $\Rightarrow$ (5) is clear.

(5) $\Rightarrow$ (1).  By logical compactness,
there is a definable $X\sub \CU$ such that $\phi(X)=G$. But then $X+\ker(\phi)=\CU$,
and since $\ker(\phi)=\bb{Z}^{k'}$ is small, $X$ is generic in $\CU$.\vskip.2cm

Assume now that the conditions hold, $k$ is as in (3), and $\phi:\CU\to G$ and $k'$
are as in (4). We will prove that $k=k'$. Consider the map $\pi_U: U\to \R^k \times
K$ and let $\Gamma$ be the image of $\ker (\phi)$ under $\pi_U$.

We first claim that $k\le k'$. Let $X\sub \CU$ be so that $\phi(X)=G$. Then $X+\ker(\phi)=\CU$. Thus, $\pi_\CU(X)+\Gamma= \R^k \times K$. Let $Y$ and $\Gamma'$ be the projections of $\pi_\CU(X)$ and $\Gamma$, respectively, into $\bb{R}^k$. We have $Y+\Gamma'=\bb{R}^k$. The set $\pi_\CU(X)$ is compact and so $Y$ is also compact.

We let $\lambda_1,\ldots, \lambda_{k'}$ be the generators of $\ker(\phi)$ and let $v_1,\ldots,
v_{k'}\in \R^k$ be their images in $\Gamma'$. If $H\sub \R^k$ is the real subspace
generated by $v_1,\ldots, v_{k'}$ then $Y+H=\R^k$, and therefore, since $Y$ is compact,
we must have $H=\R^k$. This implies that $k\le k'$.

Now let us prove that $k'\le k$. Note first that $\ker(\phi)\cap \CU^{00}=\{0\}$. Indeed,  take any definable set
$X\sub \CU$ containing $\CU^{00}$. Then, since $\phi\res X$ is definable, we must have
$\ker(\phi)\cap \CU^{00}\sub \ker(\phi) \cap X$ finite. However, by Proposition \ref{divisible},
the group $\CU^{00}$ is torsion-free, hence $\ker(\phi)\cap \CU^{00}=\{0\}$.

It follows that $\Gamma=\pi_{\CU}(\ker \phi)$ is of rank $k'$. It is also discrete.
Indeed, using $X$ as above we can find another definable set $X'$ whose image $\pi_{\CU}(X')$
  contains an open neighborhood of $0$ and no other elements of $\Gamma$.

Now, since $K$ is compact, no element of $\Gamma$ can be in $K$ and therefore  the
projection of $\Gamma$ onto $\Gamma'\sub \R^k$ is an isomorphism. Furthermore,
$\Gamma'$ is also discrete, which implies that $k'\le k$.
\end{proof}
At the end of this section, we conjecture that the above  conditions always hold.


The result below is proved in \cite[Theorem 8.2]{B-M} for $\CU$
the universal covering of an arbitrary definably compact group $G$
in o-minimal expansions of real closed fields.
\begin{prop} \label{o-minimalcase} Let $\CU$ be a
connected abelian $\bigvee$-definable group, which is definably
generated. Let $G$ be a  definable
 group and $\phi:\CU\to G$ a surjective \Vdef
homomorphism with $\ker(\phi)\simeq \bb{Z}^k$.

Then $\CU^{00}$ exists,  $\ker(\phi)\cap \CU^{00}=\{0\}$ and
$\phi(\CU^{00})=G^{00}$. Furthermore there is a topological
covering map $\phi':\CU/\CU^{00}\to G/G^{00}$, with respect to the
logic topologies, such that the following diagram commutes.
\begin{equation}\begin{diagram}
\node{\CU}\arrow{s,l}{\pi_{\CU}}\arrow{e,t}{\phi}\node{G}\arrow{s,r}{\pi_G}\\
\node{\CU/\CU^{00}}\arrow{e,t}{\phi'}\node{G/G^{00}}
\end{diagram}\end{equation}

The group $\CU/\CU^{00}$, equipped with the logic
topology, is isomorphic to $\R^k\times\mathbb T^{r}$, for $\mathbb T$ the circle
group and $r\in \bb N$. If $\CU$ is generated by a definably compact set, then $k+r=\dim(\CU)$. If, moreover, $\CU$ is torsion-free, then $\CU/\CU^{00}\simeq \R^{\dim \CU}$.
\end{prop}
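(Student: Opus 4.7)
The plan is first to import from Lemma \ref{equivalences} the existence of $\CU^{00}$, the disjointness $\ker(\phi)\cap\CU^{00}=\{0\}$, and the isomorphism $\CU/\CU^{00}\cong\R^k\times K$ for a compact abelian group $K$, where the matching of $k$ with the rank of $\ker(\phi)$ is the last sentence of that lemma. By Proposition \ref{divisible}, $\CU^{00}$ is divisible and torsion-free; existence of $G^{00}$ is automatic from Shelah's theorem applied to the definable group $G$ in the NIP structure $\CM$.

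The heart of the argument, and its main obstacle, is the identification $\phi(\CU^{00})=G^{00}$. The plan is to verify that $\phi(\CU^{00})$ is a torsion-free type-definable subgroup of $G$ of bounded index, and then invoke Proposition \ref{criterion-tor} applied to the definable connected abelian group $G$. Torsion-freeness is quick: any $x\in\CU^{00}$ with $nx\in\ker(\phi)$ lies in $\ker(\phi)\cap\CU^{00}=\{0\}$, forcing $x=0$. Bounded index follows from the surjection $\CU/\CU^{00}\twoheadrightarrow G/\phi(\CU^{00})$. For type-definability, the obstacle is that images of type-definable sets under $\bigvee$-definable maps need not be type-definable. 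To circumvent this, I would first show that $\pi_{\CU}(\Lambda)$, where $\Lambda:=\ker(\phi)$, is a discrete rank-$k$ subgroup of $\CU/\CU^{00}$, using that $\Lambda\cap Z$ is finite for every definable $Z\subseteq\CU$ (since $\phi|_Z$ is definable with small fibres) combined with Lemma \ref{preimage}. Picking a symmetric open neighborhood $V'$ of $0$ in $\CU/\CU^{00}$ meeting $\pi_{\CU}(\Lambda)$ only at $0$, a compactness argument produces a definable $W\supseteq\CU^{00}$ with $W-W\subseteq\pi_{\CU}^{-1}(V')$, so that $(W-W)\cap\Lambda=\{0\}$ and $\phi|_W$ is a definable bijection onto its image. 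Then $\phi(\CU^{00})=\phi|_W(\CU^{00})$ is the image of a type-definable set under a definable bijection, hence type-definable, and Proposition \ref{criterion-tor} yields $\phi(\CU^{00})=G^{00}$.

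From this equality, $\phi$ descends to a continuous surjective homomorphism $\phi':\CU/\CU^{00}\to G/G^{00}$ making the diagram commute, with kernel the discrete subgroup $\pi_{\CU}(\Lambda)\cong\Z^k$; this makes $\phi'$ a topological covering. The projection of $\pi_{\CU}(\Lambda)$ to $\R^k$ is injective (any kernel would be a discrete torsion-free subgroup of the compact $\{0\}\times K$, hence trivial) and of rank $k$, so is a lattice; extending the resulting graph-homomorphism from this lattice to all of $\R^k$ via divisibility of the compact connected $K$, one identifies $G/G^{00}\cong\mathbb T^k\times K$ as topological groups.

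To conclude $K\cong\mathbb T^r$ when $\CU$ is generated by a definably compact set $X$: the image $\phi(X)$ is definably compact, and saturation applied to $G=\bigcup_n \phi(X)(n)$ forces $G=\phi(X)(n)$ for some $n$, so $G$ itself is definably compact. Pillay's conjecture, in its version for o-minimal expansions of ordered groups, then identifies $G/G^{00}$ as a compact Lie group of dimension $\dim G=\dim\CU$, so $\mathbb T^k\times K$ is Lie and $K$ is a compact connected abelian Lie group, namely $\mathbb T^r$ for some $r$. Comparing dimensions gives $k+r=\dim\CU$. The torsion-free case follows since $\CU$ torsion-free together with the divisibility of $\CU^{00}$ (Proposition \ref{divisible}) makes $\CU/\CU^{00}$ torsion-free, forcing $r=0$ and so $\CU/\CU^{00}\cong\R^{\dim\CU}$. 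In the general case (no definable compactness hypothesis), the identification $K\cong\mathbb T^r$ can be reduced via Claim \ref{reduce-compact} to the quotient $\CU/H$, which is generated by a definably compact set.
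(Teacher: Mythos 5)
Your proposal follows the same skeleton as the paper's proof: existence of $\CU^{00}$ and $\ker(\phi)\cap\CU^{00}=\{0\}$ from Lemma \ref{equivalences} and torsion-freeness of $\CU^{00}$, then $\phi(\CU^{00})=G^{00}$ via ``torsion-free type-definable of bounded index'', then the descent to a covering map with discrete kernel $\pi_{\CU}(\ker\phi)\simeq\Z^k$, and finally the identification of the compact factor. Two points of genuine divergence are worth recording. First, for $\phi(\CU^{00})=G^{00}$ the paper simply cites \cite{BOPP} after checking bounded index and torsion-freeness, whereas you additionally supply the type-definability of $\phi(\CU^{00})$ by producing a definable $W\supseteq\CU^{00}$ with $(W-W)\cap\ker(\phi)=\{0\}$, so that $\phi\res W$ is a definable injection; this is a real improvement in self-containedness, since the image of a type-definable set under a $\bigvee$-definable map is not type-definable in general, and your compactness argument for $W$ is sound. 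Second, for $K\simeq\mathbb T^r$ the paper quotes $G/G^{00}\simeq\mathbb T^l$ from \cite{Pillay} (valid for any definable group in an o-minimal structure, with no definable compactness hypothesis) and reads off $K\simeq\mathbb T^{l-k}$ from the covering $\R^k\times K\to\mathbb T^l$ with kernel $\Z^k\sub\R^k$; you instead construct $G/G^{00}\simeq\mathbb T^k\times K$ by splitting the lattice using divisibility of $K$, which is fine, but you then invoke the Lie structure of $G/G^{00}$ only in the definably compact case and, for the general case, gesture at a reduction through Claim \ref{reduce-compact}. That last sentence is the one under-justified step: the reduction would require showing that the compact factor of $\CU/\CU^{00}$ agrees with that of $(\CU/H)/(\CU/H)^{00}$, which is not immediate. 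It is also unnecessary: Pillay's theorem already gives that $G/G^{00}$ is a compact Lie group for an arbitrary definable $G$, so your isomorphism $G/G^{00}\simeq\mathbb T^k\times K$ forces $K$ to be a connected compact abelian Lie group, i.e.\ a torus, with no case distinction. With that substitution your argument is complete and matches the paper's conclusion, including the dimension count $k+r=\dim\CU$ in the definably compact case and $r=0$ in the torsion-free case.
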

 \begin{proof} By Lemma \ref{equivalences}, $\CU^{00}$ exists. Let $\Gamma=\ker(\phi)$.
We first claim that $\Gamma\cap \CU^{00}=\{0\}$. Indeed, take any definable set
$X\sub \CU$ containing $\CU^{00}$. Then, since $\phi\res X$ is definable, we must have
$\Gamma\cap \CU^{00}\sub \Gamma \cap X$ finite. However, by Proposition \ref{divisible},
the group $\CU^{00}$ is torsion-free, hence $\Gamma\cap \CU^{00}=\{0\}$.

We claim that $\phi(\CU^{00})=G^{00}$. First note that since $\CU^{00}$ has bounded
index in $\CU$ and $\phi$ is surjective, the group $\phi(\CU^{00})$ has bounded
index in $G$. Because $\Gamma \cap \CU^{00}=\{0\}$ the restriction of $\phi$ to
$\CU^{00}$ is injective and hence $\phi(\CU^{00})$ is torsion-free. By \cite{BOPP},
we must have $\phi(\CU^{00})=G^{00}$.

By \cite{Pillay}, we have
$$G/G^{00}\simeq {\mathbb T}^l,$$
for some $l\in \bb N$.
We now consider $\pi_G:G\to G/G^{00}$ and define $\phi':\CU/\CU^{00}\to G/G^{00}$ as
follows: For $u\in \CU$, let $\phi'(\pi_{\CU}(u))=\pi_G(\phi(u))$. Since
$\phi(\CU^{00})=G^{00}$ this map is a well-defined homomorphism which makes the
above diagram commute. It is left to see that $\phi'$ is a covering map.

It follows from what we established thus far that
$\ker(\phi')=\pi_{\CU}(\Gamma)=\bb{Z}^k$. Let us see that this is a discrete
subgroup of $\CU/\CU^{00}$. Indeed,  as we already saw, for every compact
neighborhood $W\sub \CU/\CU^{00}$ of $0$, there is a definable set $Z\sub \CU$ such
that $\pi_{\CU}^{-1}(W)\sub Z$. But we already saw that $Z\cap \Gamma$ is finite and
hence $W\cap \ker(\phi')$ must be finite. It follows that $\ker(\phi')$ is discrete.

By Lemma \ref{equivalences}, $\CU/\CU^{00}$, equipped with the Logic topology, is
locally compact. Since $\phi':\CU/\CU^{00}\to G/G^{00}$ is a surjective homomorphism
with discrete kernel  it is sufficient to check that it is continuous as a map
between topological groups. If $W\sub G/G^{00}$ is open then $V=\pi_G^{-1}(W)$ is a
\Vdef subset of $G$ and hence $\phi^{-1}(V)$ is a \Vdef subset of $\CU$ (because
$\ker \phi$ is a small group). By commutation, this last set equals
$\pi_{\CU}^{-1}(\phi'^{-1}(W))$ and therefore $\phi'^{-1}(W)$ is open in
$\CU/\CU^{00}$.

By Lemma \ref{equivalences}, $\CU/\CU^{00}\simeq \R^k\times K$, for a
compact group $K$. We now have a covering map $\phi':\R^k\times K \to
G/G^{00}={\mathbb T}^{k+r}$, with
$\ker(\phi')=\Z^k\sub\R^k$. It follows that $K\simeq {\mathbb T}^r$.

If $\CU$ is generated by a definably compact set, $G$ will be definably
compact. In this case, by the work in \cite{ElSt}, \cite{HPP} and \cite{pet-sbd},
$$G/G^{00}\simeq {\mathbb T}^{\dim(G)}$$
and, hence, $k+r=\dim(G)=\dim(\CU)$.

If, moreover, $\CU$ is torsion-free, we have $r=0$.
\end{proof}

We summarize the above results in the following theorem.

\begin{thm}\label{finalcor} Let $\CU$ be a connected abelian \Vdef group which is definably generated.  Then there is $k\in \bb N$ such that the following are equivalent:

\noi (i) $\CU$ contains a  definable generic set.

\noi (ii) $\CU^{00}$ exists.

\noi (iii) $\CU^{00}$ exists and $\CU/\CU^{00}\simeq \R^k\times {\mathbb T}^r$, for some $r\in \bb N$.

\noi (iv) There is a definable group $G$, with $\dim G=\dim
\CU$,  and a \Vdef surjective homomorphism $\phi:\CU\to G$.

If in addition $\CU$ is generated by a definably compact set, then
(ii) is strengthened by the condition that $k+r=\dim \CU$.
\end{thm}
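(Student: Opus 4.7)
The plan is to assemble Theorem \ref{finalcor} as a direct consolidation of Lemma \ref{equivalences} and Proposition \ref{o-minimalcase}, which together already contain all the substantive arguments. The theorem is essentially a compact restatement of those results; two minor points remain to be checked, namely that the integer $k$ appearing in clause (iii) agrees with the one implicit in clause (iv), and that the definable group $G$ in (iv) can be arranged so that $\dim G = \dim \CU$.

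First I will establish the equivalences (i) $\Lrarr$ (ii) $\Lrarr$ (iii) $\Lrarr$ (iv) by chaining the implications through the numbered conditions of Lemma \ref{equivalences}. The equivalence (i) $\Lrarr$ (ii) is exactly (1) $\Lrarr$ (2) of that lemma. For (ii) $\Rightarrow$ (iii), Lemma \ref{equivalences}(3) gives $\CU/\CU^{00}\simeq \R^k\times K$ with $K$ some compact abelian group; to upgrade $K$ to a torus $\mathbb T^r$, I combine this with the explicit description of $\CU/\CU^{00}$ in Proposition \ref{o-minimalcase}, where the covering map $\phi':\CU/\CU^{00}\to G/G^{00}\simeq \mathbb T^{k+r}$ with discrete kernel $\bb{Z}^{k}$ forces $K\simeq\mathbb T^r$. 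The implication (iii) $\Rightarrow$ (iv) follows from Lemma \ref{equivalences}(4): one obtains a definable $G$ and \Vdef surjective $\phi:\CU\to G$ with $\ker\phi\simeq \bb{Z}^{k'}$; since $\bb{Z}^{k'}$ is small (hence of o-minimal dimension zero) and $\phi$ is surjective, necessarily $\dim G = \dim \CU$.

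For (iv) $\Rightarrow$ (i) I proceed directly: $\ker\phi$ is a compatible \Vdef subgroup of $\CU$, and the hypothesis $\dim G = \dim\CU$ forces $\dim\ker\phi = 0$, so $\ker\phi$ is small. By logical compactness there is a definable $X\sub \CU$ with $\phi(X) = G$, whence $X + \ker\phi = \CU$, and smallness of $\ker\phi$ makes $X$ generic in $\CU$. The uniqueness of the integer $k$ across (iii) and (iv) is the final equality $k = k'$ proved at the end of Lemma \ref{equivalences}.

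The additional claim under the hypothesis that $\CU$ is generated by a definably compact set, namely $k + r = \dim \CU$, is exactly the corresponding statement of Proposition \ref{o-minimalcase} applied to the $G$ provided by (iv), which inherits definable compactness from $\CU$. I do not expect any substantive obstacle: the proof is essentially a bookkeeping exercise among previously proved results, the only mildly nontrivial step being the compactness argument in (iv) $\Rightarrow$ (i), which is already implicit in the proof of (5) $\Rightarrow$ (1) of Lemma \ref{equivalences} once one has noted that the dimensional hypothesis forces $\ker\phi$ to be small.
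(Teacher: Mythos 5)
Your proposal is correct and follows essentially the same route as the paper, whose proof of Theorem \ref{finalcor} is simply to cite Lemma \ref{equivalences} and Proposition \ref{o-minimalcase}; you have merely made explicit the bookkeeping the paper leaves implicit (that the small kernel in Lemma \ref{equivalences}(4) forces $\dim G=\dim\CU$, and conversely that $\dim G=\dim\CU$ in (iv) forces $\ker\phi$ to have dimension zero, hence to be small, so that (iv) yields condition (5) of the lemma with a genuinely generic $X$). No gaps.
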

\begin{proof}
By Lemma \ref{equivalences} and Proposition \ref{o-minimalcase}.
\end{proof}

\begin{theorem}\label{excursion}
Let $\CU$ be a connected abelian \Vdef group which is definably generated. Assume that $X\sub
\CU$ is a definable set and $\Lam\leqslant \CU$  is a finitely generated subgroup such that $X+\Lam=\CU$.

Then there is a subgroup $\Lam'\sub \Lam$ such that $\CU/\Lam'$ is a definable group.

If $\CU$ generated by a definably compact set, then $\CU/\Lam'$ is moreover definably compact.
\end{theorem}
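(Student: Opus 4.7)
The plan is to invoke Theorem~\ref{finalcor} to obtain the structure of $\CU/\CU^{00}$, choose $\Lam'$ as a rank-$k$ subgroup of $\Lam$ whose image in $\CU/\CU^{00}$ is discrete with compact quotient, and then apply Lemma~\ref{conditions} to deduce that $\CU/\Lam'$ is interpretable, hence definable.

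First, since $\Lam$ is finitely generated and $X+\Lam=\CU$, Fact~\ref{generic1} tells me that $X$ is a definable generic set, so Theorem~\ref{finalcor} applies: $\CU^{00}$ exists and $L:=\CU/\CU^{00}\simeq\R^k\times\mathbb T^r$. Let $\pi:\CU\to L$ be the projection and let $p:L\to\R^k$ be the projection onto the first factor. Then $\pi(X)$ is compact and $\pi(X)+\pi(\Lam)=L$, so projecting to $\R^k$ shows that the finitely generated subgroup $p\pi(\Lam)\leq\R^k$ must $\R$-span $\R^k$ (a compact set plus a subgroup contained in a proper subspace cannot cover $\R^k$). I then pick $\lam_1,\ldots,\lam_k\in\Lam$ whose images $v_i:=p(\pi(\lam_i))$ are $\R$-linearly independent, and set $\Lam':=\la\lam_1,\ldots,\lam_k\ra\leq\Lam$.

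By $\R$-independence of the $v_i$, the map $p\co\pi$ is injective on $\Lam'$, so $\pi$ is injective on $\Lam'$ and $\pi(\Lam')$ projects bijectively onto the rank-$k$ lattice $\Xi:=\Z v_1+\cdots+\Z v_k\sub\R^k$; in particular $\pi(\Lam')$ is discrete in $L$. Writing $\pi(\Lam')$ as the graph of a homomorphism $\sigma:\Xi\to\mathbb T^r$, I extend $\sigma$ to a continuous homomorphism $\tilde\sigma:\R^k\to\mathbb T^r$ by lifting $\sigma$ on the $\Z$-basis $v_1,\ldots,v_k$ to $\R^r$ and extending $\R$-linearly. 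The topological automorphism $(x,t)\mapsto(x,t-\tilde\sigma(x))$ of $L$ sends $\pi(\Lam')$ onto $\Xi\times\{0\}$, which yields $L/\pi(\Lam')\simeq\mathbb T^k\times\mathbb T^r$ and hence compactness of $L/\pi(\Lam')$. I expect this change-of-coordinates step to be the main technical point, since $\pi(\Lam')$ is a ``twisted'' copy of $\Xi$ inside $\R^k\times\mathbb T^r$ rather than lying in the $\R^k$-factor.

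To apply Lemma~\ref{conditions}, I fix a compact fundamental domain $F\sub L$ for $\pi(\Lam')$ and, by Lemma~\ref{preimage}, a definable $Y_1\sub\CU$ with $\pi^{-1}(F)\sub Y_1$. Choosing also a definable $X_0\sub\CU$ containing $\CU^{00}$, I set $Y:=Y_1+X_0$. For any $u\in\CU$, writing $\pi(u)=\pi(y_1)+\pi(\lam')$ with $y_1\in Y_1$ and $\lam'\in\Lam'$ yields $u-y_1-\lam'\in\CU^{00}\sub X_0$, so $u\in Y+\Lam'$; and since $\pi(Y)$ is compact, $\pi(\Lam')$ is discrete, and $\pi$ is injective on $\Lam'$, the intersection $Y\cap\Lam'$ is finite. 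Lemma~\ref{conditions} then gives that $\CU/\Lam'$ is interpretable, and by the discussion following the definition of interpretable quotient in the introduction, $\CU/\Lam'$ is definable. For the last clause, if $\CU$ is generated by a definably compact set $C$, the image $\phi(C)$ under $\phi:\CU\to G:=\CU/\Lam'$ is a definably compact generating set of the definable group $G$ (using continuity of the \Vdef homomorphism $\phi$), and the increasing chain $\{\phi(C)(n)\}_n$ of definable subsets of $G$ stabilizes by saturation at some $\phi(C)(N)=G$, which is definably compact.
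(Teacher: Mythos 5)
Your proof is correct and follows essentially the same route as the paper: obtain $\CU/\CU^{00}\simeq\R^k\times\mathbb T^r$ from Theorem \ref{finalcor}, extract $\lam_{1},\ldots,\lam_{k}$ with $\R$-independent images in $\R^k$ using the compactness of $\pi(X)$, and then combine Lemma \ref{preimage} with Lemma \ref{conditions}. Your explicit ``untwisting'' of $\pi(\Lam')$ is a fine (if slightly more elaborate than necessary) way to produce the compact set covering $L$ modulo $\pi(\Lam')$ --- one can simply take $F\times\mathbb T^r$ for $F$ a compact fundamental domain of the lattice $\Xi$ in $\R^k$ --- and your treatment of the definable compactness clause is an equivalent minor variant of the paper's.
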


\begin{proof} Since $X+\Lam=\CU$, $X$ is generic. By Theorem \ref{finalcor}, $\CU/ \CU^{00}\simeq \R^{k}\times {\mathbb T}^r$, for some $k, r\in \bb N$. We now consider $\Delta=\pi_{\CU}(\Lambda)\sub
\R^k\times {\mathbb T}^r$ and let $\Delta'\sub \R^k$ be the projection of $\Delta$
into $\R^k$. Since $X+\Lambda=\CU$, we have $\pi_{\CU}(X)+\Delta= \R^k\times
{\mathbb T}^r$. Hence, if $Y$ is the projection of $\pi_{\CU}(X)$ into $\R^k$ then
we have $Y+\Delta'=\R^k$. The set $\pi_{\CU}(X)$ is compact and so $Y$ is also
compact.

We let $\lambda_1,\ldots, \lambda_m$ be generators of $\Lambda$ and let $v_1,\ldots,
v_m\in \R^k$ be their images in $\Delta'$. If $H\sub \R^k$ is the real subspace
generated by $v_1,\ldots, v_m$ then $Y+H=\R^k$, and therefore, since $Y$ is compact,
we must have $H=\R^k$. This implies that among $v_1,\ldots, v_m$ there are elements
$v_{i_1},\ldots, v_{i_k}$ which are $\R$-independent. It follows that
$\lambda_{i_1},\ldots, \lambda_{i_k}\in \Delta$ are $\Z$-independent. If we let
$\Lambda'$ be the group generated by $\lambda_{i_1},\ldots, \lambda_{i_k}$ then we
immediately see that the restriction of $\pi_{\CU}$ to $\Lambda'$ is injective. We
claim that $\CU/\Lambda'$ is definable.

First, let us see that for every definable $Z\sub \CU$, the set $Z\cap \Lambda'$ is
finite. Indeed, $\pi_{\CU}(Z)$ is a compact subset of $\R^k\times {\mathbb T}^r$ and
hence $\pi_{\CU}(Z)\cap (\Z v_{i_1}+\dots + \Z v_{i_k})$ is finite. Because $\pi_{\CU}|\Lambda'$
is injective it follows that $Z\cap \Lambda'$ is also finite.

We can now take a compact set $K\sub \R^k\times {\mathbb T}^r$ such that $K+\Z^k=
\R^k\times \mathbb T^r$. It follows that $\pi_{\CU}^{-1}(K)+\Lambda'=\CU$. By Lemma
\ref{preimage},
 there is a definable set $Z\sub \CU$ such that
$\pi_{\CU}^{-1}(K)\sub Z$. We now have $Z+\Lambda'=\CU$ and $Z\cap \Lambda'$ finite.
By Lemma \ref{conditions}, $\CU/\Lambda'$ is definable.

For the last clause, let $f:\CU\to \CU/\Lam'$ be the quotient map, and $X'$ a definable subset of $\CU$ such that $f(X')=\CU/\Lam'$. Since $\CU$ is generated by a definably compact set, the closure of $X'$ in $\CU$ must be a subset of a definably compact set and, hence, itself definably compact. But then it is easy to verify that $\CU/\Lam'=f(X')$ is definably compact.
\end{proof}



We end this section with a conjecture.\\

\noindent{\bf Conjecture A.} {\em   Let $\CU$ be a connected abelian \Vdef group  which is definably
generated. Then

\noindent (i) $\CU$ contains a definable generic set.

\noindent (ii) $\CU$ is divisible.}\\

Although we cannot prove the above conjecture, we can reduce it to
proving (i) under additional assumptions.\\

\noindent{\bf Conjecture B.} {\em   Let $\CU$ be a connected abelian \Vdef group,  generated by a definably compact set. Then $\CU$ contains a definable generic set.}\\

\begin{claim} Conjecture $B$ implies Conjecture $A$.
\end{claim}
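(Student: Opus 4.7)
The plan is to reduce Conjecture A to Conjecture B by passing to a suitable torsion-free quotient of $\CU$ and then transferring back, using the equivalences already established in Lemma \ref{equivalences}. Let me outline the steps.

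First, I would invoke Claim \ref{reduce-compact} to obtain a definable torsion-free subgroup $H \leqslant \CU$ such that $\CU/H$ is a locally definable group generated by a definably compact set. I would briefly check that $\CU/H$ inherits the hypotheses needed to apply Lemma \ref{equivalences}: it is abelian (as a quotient of an abelian group), it is locally definable (and hence \Vdef) by Fact \ref{edmundo}, and it is definably generated by the compact generator provided by Claim \ref{reduce-compact}. The only slightly non-automatic point is connectedness: any \Vdef compatible clopen subset of $\CU/H$ pulls back under the projection $\pi\colon\CU\to\CU/H$ to a \Vdef compatible clopen subset of $\CU$, which must be trivial by connectedness of $\CU$, so $\CU/H$ is connected.

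Now assuming Conjecture B, the group $\CU/H$ contains a definable generic set. By Lemma \ref{equivalences} ($(1)\Rightarrow(5)$) applied to $\CU/H$, there is a definable group $G$ and a \Vdef surjective homomorphism $\psi\colon\CU/H\to G$. Composing with the projection, $\phi=\psi\circ\pi\colon\CU\to G$ is a \Vdef surjective homomorphism onto a definable group. Applying Lemma \ref{equivalences} ($(5)\Rightarrow(1)$) to $\CU$ itself yields a definable generic set in $\CU$, establishing part (i) of Conjecture A.

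For part (ii), once (i) is known, Lemma \ref{equivalences} ($(1)\Rightarrow(2)$) guarantees the existence of $\CU^{00}$, and then Proposition \ref{divisible}(2) delivers divisibility of $\CU$. The main obstacle—or really the only delicate point—is the verification that $\CU/H$ remains connected and definably generated so that Lemma \ref{equivalences} is genuinely applicable; once this is in hand the rest is a formal composition of the implications already available in the paper.
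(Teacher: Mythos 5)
Your argument is essentially correct but takes a genuinely different route from the paper's. The paper first passes to the universal cover of $\CU$ (to work with a torsion-free group) and then runs an induction on dimension, using \cite[Theorem 5.2]{ed1} at each step to split off a one-dimensional definable torsion-free subgroup $H$ when the generating set is not definably compact; the generic set obtained downstairs is transferred upstairs by the direct observation that its preimage equals $Z_0+H$ for some definable $Z_0$, hence is definable (because $H$ is definable) and generic. You instead apply Claim \ref{reduce-compact} once --- which packages that same induction --- and then route the transfer of genericity through Lemma \ref{equivalences}; this avoids the universal cover and the explicit induction, which is a genuine streamlining, and your verification that $\CU/H$ is connected and definably generated is the right point to worry about (the paper cites \cite[Corollary 4.8]{ed1} for connectedness of such quotients). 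The one step you should not leave to the lemma as a black box is the final appeal to $(5)\Rightarrow(1)$: the paper's proof of that implication deduces genericity of $X$ from $X+\ker(\phi)=\CU$ together with the \emph{smallness} of the kernel, whereas your composite $\phi=\psi\circ\pi$ has kernel containing the positive-dimensional group $H$, so that proof does not apply verbatim. The repair is immediate and uses exactly the paper's ingredient: take $\psi$ as in item (4), so $\ker(\psi)\simeq\Z^{k'}$ is small and $\ker(\phi)=H+\Lambda$ with $\Lambda$ small; then for definable $X$ with $\phi(X)=G$ the definable set $X+H$ satisfies $\Lambda+(X+H)=\CU$ and is generic --- or, even more directly, pull back your generic set $Z\sub\CU/H$, since $\pi^{-1}(Z)=Z_0+H$ is definable and generic. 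Your derivation of divisibility from (i) via $(1)\Rightarrow(2)$ and Proposition \ref{divisible} coincides with what the paper does.
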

\begin{proof}

We assume that Conjecture $B$ is true.

Let $\CU$ be a connected abelian \Vdef group which is definably generated. Let $\CV$ be the universal cover of $\CU$ (see
\cite{edel2}). Because $\CU$ is the homomorphic image of $\CV$
under a \Vdef homomorphism whose kernel is a set of dimension $0$,
it is sufficient to prove that $\CV$ contains a generic set and
that $\CV$ is divisible.

 The group $\CV$ is connected, torsion-free and generated by a definable set
$X\sub \CV$. We work by induction on $\dim(\CV)$.

Let $Y$ be the closure of $X$ with respect to the group topology
of $\CV$.\vskip.2cm

\noi{\bf Case 1 } The set $Y$ is definably compact.\vskip.2cm

  Since  $\CV$ is generated by $Y$, then by our standing
assumption we may conclude that $\CV$ contains a definable generic set. By Theorem \ref{finalcor} and Proposition \ref{divisible}, $\CV$ is divisible.\vskip.2cm

 \noi{\bf Case 2 } The set $Y$ is not definably compact.\vskip.2cm

 In this case, we can apply \cite[Theorem 5.2]{ed1} and obtain a definable 1-dimensional, definably
connected, divisible, torsion-free subgroup of $\CV$, call it $H$. Clearly, $H$ is a
compatible subgroup of $\CV$, hence the group $\CV/H$ is  $\bigvee$-definable, connected (\cite[Corollary 4.8]{ed1}),
torsion-free and definably generated  (by the
image of $X$ under the projection map). We have $\dim(\CV/H)<\dim \CV$, so by
induction, the conjecture holds for $\CV/H$, hence it is divisible and contains a
definable generic set $Z$. Because $H$ is divisible as well, it follows that $\CV$
is divisible. It is easy to see that the pre-image of $Z$ in $\CV$ is a definable
generic subset of $\CV$.\end{proof}

Finally, although we know that $\CU$ needs to be definably generated in order to guarantee (i) (by  Fact \ref{generic0}(2)), we do not know if the same is true for (ii).\\

\noindent{\bf Conjecture C.} {\em   Let $\CU$ be a connected abelian \Vdef group. Then $\CU$ is divisible.}

\begin{bibdiv}
\begin{biblist}
\normalsize

\bib{BE}{article}{
   author={Baro, El{\'{\i}}as},
   author={Edmundo, M{\'a}rio J.},
   title={Corrigendum to: ``Locally definable groups in o-minimal
   structures''  by
   Edmundo},
   journal={J. Algebra},
   volume={320},
   date={2008},
   number={7},
   pages={3079--3080},
}

\bib{BaOt}{article}{
   author={Baro, El{\'{\i}}as},
   author={Otero, Margarita},
   title={Locally definable homotopy},
   journal={Annals of Pure and Applied Logic},
   volume={161},
   date={2010},
   number={4},
   pages={488--503},
}

\bib{B-M}{article}{
   author={Berarducci, Alessandro},
   author={Mamino, Marcello},
   title={On the homotopy type of definable groups in an o-minimal
   structure},
   journal={J. Lond. Math. Soc. (2)},
   volume={83},
   date={2011},
   number={3},
   pages={563--586},
}

\bib{BOPP}{article}{
   author={Berarducci, Alessandro},
   author={Otero, Margarita},
   author={Peterzil, Yaa'cov},
   author={Pillay, Anand},
   title={A descending chain condition for groups definable in o-minimal
   structures},
   journal={Ann. Pure Appl. Logic},
   volume={134},
   date={2005},
   number={2-3},
   pages={303--313},
}

\bib{Dolich}{article}{
   author={Dolich, Alfred},
   title={Forking and independence in o-minimal theories},
   journal={Journal of Symbolic Logic},
   volume={69},
   date={2004},
   pages={215--240},
}
\bib{ed1}{article}{
   author={Edmundo, M{\'a}rio J.},
   title={Locally definable groups in o-minimal structures},
   journal={J. Algebra},
   volume={301},
   date={2006},
   number={1},
   pages={194--223},
}

\bib{edel2}{article}{
   author={Edmundo, M{\'a}rio J.},
   author={Eleftheriou, Pantelis E.},
   title={The universal covering homomorphism in o-minimal expansions of groups},
   journal={Math. Log. Quart.},
   volume={53},
   date={2007},
   pages={571--582},
  }

\bib{ep-sbd}{article}{
   author={Eleftheriou, Pantelis E.},
   author={Peterzil, Ya'acov},
   title={Definable groups
as homomorphic images of semilinear and field-definable groups},
   journal={preprint},
   volume={},
   date={},
   number={},
   pages={},
}

\bib{ep-ND}{article}{
   author={Eleftheriou, Pantelis E.},
   author={Peterzil, Ya'acov},
   title={Lattices in locally definable subgroups of $\la R^n,+\ra$},
   journal={preprint},
   volume={},
   date={},
   number={},
   pages={},
}

\bib{ElSt}{article}{
   author={Eleftheriou, Pantelis E.},
   author={Starchenko, Sergei},
   title={Groups definable in ordered vector spaces over ordered division
   rings},
   journal={J. Symbolic Logic},
   volume={72},
   date={2007},
   number={4},
   pages={1108--1140},
}

\bib{HM}{book}{
   author={Hofmann, Karl H.},
   author={Morris, Sidney A.},
   title={The structure of compact groups},
   series={de Gruyter Studies in Mathematics},
   volume={25},
   note={A primer for the student---a handbook for the expert},
   publisher={Walter de Gruyter \& Co.},
   place={Berlin},
   date={1998},
   pages={xviii+835},
}

\bib{HPP}{article}{
   author={Hrushovski, Ehud},
   author={Peterzil, Ya'acov},
   author={Pillay, Anand},
   title={Groups, measures, and the NIP},
   journal={J. Amer. Math. Soc.},
   volume={21},
   date={2008},
   number={2},
   pages={563--596},
}

\bib{KT}{book}{
   author={Komj{\'a}th, P{\'e}ter},
   author={Totik, Vilmos},
   title={Problems and theorems in classical set theory},
   series={Problem Books in Mathematics},
   publisher={Springer},
   place={New York},
   date={2006},
   pages={xii+514},
}

\bib{pet-sbd}{article}{
   author={Peterzil, Ya'acov},
   title={Returning to semi-bounded sets},
   journal={J. Symbolic Logic},
   volume={74},
   date={2009},
   number={2},
   pages={597--617},
}

\bib{pest-tri}{article}{
   author={Peterzil, Ya'acov},
   author={Starchenko, Sergei},
   title={A trichotomy theorem for o-minimal structures},
   journal={Proceedings of London Math. Soc.},
   volume={77},
   date={1998},
   number={3},
   pages={481--523},
}

        \bib{Pet-Pi}{article}{
   author={Peterzil, Ya'acov},
   author={Pillay, Anand},
   title={Generic sets in definably compact groups},
   journal={Fund. Math.},
   volume={193},
   date={2007},
   number={2},
   pages={153--170},
}


\bib{Pillay}{article}{
author={Pillay, A.},
     title={Type-definability, compact Lie groups, and $o$-minimality},
   journal={J. Math. Logic},
    volume={},
      date={2004},
    number={4},
     pages={147\ndash 162},
}

\bib{Shelah}{article}{
author={Shelah, S.},
     title={Minimal bounded index subgroups and dependent theories},
   journal={Proc. Amer. Math. Soc.},
    volume={136},
      date={2008},
     pages={1087\ndash 1091},
}

\end{biblist}
\end{bibdiv}

\end{document}